\documentclass[12pt,reqno]{amsart}

\usepackage{amscd,amsmath}
\usepackage{amssymb,latexsym,amsthm}
\usepackage{amsfonts}
\usepackage{enumerate,color,graphicx}
\usepackage[all]{xypic}

\usepackage[pagebackref=false,colorlinks=true,linkcolor=red,citecolor=red]{hyperref}
\hypersetup{backref,pdfpagelabels=true,pdfpagemode=FullScreen,hypertex,hyperindex=true}

\makeatletter
\def\@seccntformat#1{%
  \protect\textup{%
    \protect\@secnumfont
    \expandafter\protect\csname format#1\endcsname 
    \csname the#1\endcsname
    \protect\@secnumpunct
  }%
}

\makeatother

\theoremstyle{theorem}
\newtheorem{theo}{Theorem}[section]
\newtheorem{coro}[theo]{Corollary}
\newtheorem{lemm}[theo]{Lemma}
\newtheorem{prop}[theo]{Proposition}

\newtheorem{theosub}{Theorem}[subsection]
\newtheorem{corosub}[theosub]{Corollary}
\newtheorem{lemmsub}[theosub]{Lemma}
\newtheorem{propsub}[theosub]{Proposition}

\theoremstyle{definition}

\newtheorem{rema}[theo]{Remark}
\newtheorem{remas}[theo]{Remarks}

\newtheorem{remasub}[theosub]{Remark}

\def\nid{\noindent}
\def\cf{\mathfrak{c}}
\def\of{\mathfrak{o}}
\def\smin{\smallsetminus}
\def\sue{\subseteq}
\def\OS{\text{\sf O}}

\def\SL{\mathsf{S}}
\def\TL{\mathsf{T}}
\newcommand{\ur}{{{\rlap{$\ $}\hbox{$\uparrow$}}}}%
\def\setof#1#2{\{#1 \ | \ #2\} }
\def\set#1{\{#1\} }
\def\setof#1#2{\{#1 \, |\, #2\} }
\def\qtq#1{\quad\text{#1}\quad}
\def\MC{{\sf M}}
\def\op{^{\text{\rm op}}}
\def\bim{\tbigwedge}
\def\TOP{\text{\bf Top}}
\def\frm{\text{\bf Frm}}
\def\loc{\text{\bf Loc}}
\newcommand{\tbigcap}{\mathop{\textstyle \bigcap}}
\newcommand{\tbigcup}{\mathop{\textstyle \bigcup }}
\newcommand{\tbigvee}{\mathop{\textstyle \bigvee }}
\newcommand{\tbigwedge}{\mathop{\textstyle \bigwedge }}
\def\intr{{\mathrm{int}}\,}
\def\clr{{\mathrm{cl}}\,}
\def\co{{^{\sf c}}}

\makeatletter
\@namedef{subjclassname@2020}{%
  \textup{2020} Mathematics Subject Classification}
\makeatother

\begin{document}

\title[Continuity and openness of maps on locales by way of adjunctions]{Continuity and openness of maps on locales\\[2mm] by way of Galois adjunctions}


\author[João Areias]{João Areias}

\address{\hspace*{-\parindent}Department of Mathematics, University of Coimbra,  3000-143 Coimbra, Portugal \newline {\it Email address}: {\tt uc2020217955@student.uc.pt}}

\author[Jorge Picado]{Jorge Picado}

\address{\hspace*{-\parindent}CMUC, Department of Mathematics, University of Coimbra,  3000-143 Coimbra, Portugal \newline {\it Email address}: {\tt picado@mat.uc.pt}}

\subjclass[2020]{06A15, 06D22, 18F70, 54C10}

\keywords{Galois connection, adjunction, frame, locale, localic map, sublocale lattice, localic image, localic preimage, interior, closure, open map, closed map}

\date{}

\begin{abstract}
We study four adjoint situations in pointfree topology that interchange images and preimages with closure and interior operators and establish with them a number of characterizations for meet-preserving maps, localic maps, open maps (in a broad sense) and open localic maps between locales.
The principal and most attractive feature of these adjunctions is that they are all
concerned with elementary ideas and basic concepts of localic topology: the use of the concrete language of sublocales and its technique simplifies the reasoning.

We then revisit open localic maps in detail and present a new proof of Joyal-Tierney
open mapping theorem.
We end with a study of the interchange laws between preimages/images and closure/interior operators,
making clear the similarities and differences with the classical realm.
\end{abstract}

\maketitle

\section{Introduction}

\bigskip
Given two partially ordered sets $X$ and $Y$, a {\em Galois connection} \cite{EKMS} between them consists of a pair of order-preserving maps $f \colon  X \to Y$ and $g \colon  Y \to X$ such that
$$
f(x) \leq y \iff x \leq g(y)
$$
for all $x\in X$ and $y\in Y$. Nowadays, one usually refers to this situation as a  {\em \textup(Galois\textup) adjunction}, since it is a special case of the key concept in category theory of an {\em adjunction} (in fact, a poset is simply a category in which there is at most one arrow between objects).  One calls $f$ {\em left adjoint} to $g$ and $g$ {\em right adjoint} to $f$ and writes $f\dashv g$.

 It is standard that (cf. \cite{EKMS,ME})
\begin{enumerate}
\item $fg\le \mathrm{id}_Y$ and $\mathrm{id}_X\le gf$,
\item left adjoints preserve all existing suprema and right adjoints preserve all existing infima,
\item
and  if $X,Y$ are complete lattices then each $f\colon  X \to Y$ preserving all suprema is a left adjoint, and each $g \colon  Y \to X$ preserving all infima is a right adjoint.
\end{enumerate}

Adjunctions determine a particularly close tie between two categories and ``occur almost
everywhere in many branches of
Mathematics'' (MacLane \cite[p. 103]{SM}). In particular, Galois adjunctions determine a certain connection between two posets and may describe many mathematical concepts. For instance, let
$f\colon X\to Y$ be a function between topological spaces and define the pair of assignments

\bigskip
\begin{center}
$\xymatrix@C=30pt@R=-7pt{& & & A \ar@/^.6pc/@{|..>}[r]_{}  &  \clr(f[A]) \\
\mbox{Closed}(X) \ar@/^1.5pc/[rr]^{f^{\to}}   & & \mbox{Closed}(Y) \ar@/^1.5pc/[ll]^{f^{\leftarrow}} & & \\
& & & \clr(f^{-1}[B]) &    B \ar@/^.6pc/@{|..>}[l]_{}\    }
$
\end{center}

\bigskip\nid
Then it is very easy to check that
\begin{quote}
{\em $f$ is continuous iff $(f^\to,f^\leftarrow)$ is an adjoint pair.}\quad (\cite{EL06,EL15})
\end{quote}

\medskip
Motivated by this basic example and by recent characterizations in the pointfree setting of {\em continuity} (that is, the property of a mapping to be localic; see \cite{EPP22}), we address in this article similar adjoint situations in pointfree topology (that is, in the category of locales and localic maps) that interchange images or preimages with closure or interior operators. We establish with them a number of characterizations for meet-preserving maps, localic maps, open maps (in a broad sense) and open localic maps between locales.

The main feature of the adjunctions we  are presenting here is that they are all
concerned with elementary ideas and basic concepts of localic topology: the use of the concrete language of sublocales and its technique simplifies the reasoning. There is, however, a delicate point: the complements
of closed or open sublocales have to be formed in the lattice of all sublocales
and not set-theoretically as in classical topology.

Our results may be extended to the setting in \cite{PPT,EPP22} (Heyting or implicative semilattices) but we do not pursue that direction in the present paper, we leave it for some future work. Our goal is to state these adjunctions just in the pointfree setting in order to better compare the pointfree notions of continuity and openness (as properties of meet-preserving maps) with their classical counterparts.

\medskip
Here is an overview of the paper. We begin with a brief account of the background involved here (Section 2). Our general reference for pointfree topology and lattice theory is \cite{PP12} (or the Appendix in \cite{PP21}) and the more recent \cite{PP22}. In Section 3, we describe four types of adjunctions with closure and interior operators that characterize respectively the preservation of meets, continuity, openness and
continuity+openness of a plain map between locales. In Section 4, we revisit the
open mapping theorem of Joyal-Tierney \cite{JT84}. In particular, we present a short proof for it that takes advantage of the concrete technique of sublocales.
In the last two sections (5 and 6), we deal with the commutative properties between preimages (and images) and closure or interior operators,
emphasizing the similarities and differences with the classical setting.

\section{Preliminaries} For the convenience of the reader, we recall here the notions and facts which will be of particular relevance in our
context.

\subsection{Frames.} A {\em frame}  is a complete lattice $L$ satisfying the distributivity law
\[(\tbigvee A\big)\wedge b
= \tbigvee\setof{a\wedge b}{a \in A}\]
for all $A\subseteq L$ and $b\in L$. A \emph{frame homomorphism} preserves all joins (including the bottom element 0 of the frame) and all finite meets (including the top element 1).

The distributivity law makes $(-)\wedge b$, for each $b$, a left adjoint; consequently a frame has a {\em Heyting structure}
with the Heyting operation $\to$ satisfying
\[
a\wedge b\leq c \qtq{iff} a\leq b\to c.
\]
A frame has {\em pseudocomplements} $a^* = a \to 0=\tbigvee\{b\in L\mid a\wedge b=0\}$. Recall the standard fact that
$a\leq a^{**}$.

 We will also use  simple Heyting  rules like (see e.g. \cite{PP12}):
\begin{enumerate}[(H1)]
\item $1\to a=a$, $a\to b=1$ iff $a\le b$.
\item $a\to\bim_{i\in I}b_i= \bim_{i\in I}(a\to b_i)$.
\item $a\to(b\to c)=(a\wedge b)\to c=b\to (a\to c)$.
\item $(\tbigvee_{i\in I}a_i)\to b= \bim_{i\in I}(a_i\to b)$.
\end{enumerate}

\subsection{Locales.}
A typical frame is the lattice $\Omega(X)$ of open sets of a topological space $X$, and for each continuous map $f \colon  X \to Y$ there is a  frame homomorphism $\Omega(f) = (U \mapsto f^{-1}[U]) \colon  \Omega(Y) \to \Omega(X)$. Thus we have a contravariant functor
\[
\Omega \colon  \TOP \to \frm,
\]
where $\TOP$ is  the category of topological spaces and continuous maps, and $\frm$ that of frames and frame homomorphisms. To make it covariant one considers the {\em category of locales} $\loc$, the dual category of $\frm$.

\subsection{Localic maps.}\label{locmaps}
It is of advantage to view  $\loc=\frm\op$ as a concrete category with the opposites of frame homomorphisms $h\colon M\to L$ represented by their right adjoints $f\colon  L \to M$, called {\em localic maps}. Being right adjoints, localic maps are meet-preserving maps. They are precisely the meet-preserving $f\colon L\to M$ that satisfy the following two conditions (see \cite{PP12} for more information):
\begin{enumerate}[(L1)]
\item
$f(a)=1\ \Rightarrow \ a=1$.
\item
$f(h(a)\to b)= a\to f(b)$ for all $a\in M$, $b\in L$.
\end{enumerate}

Indeed, (L1) is the condition on $f$ that corresponds to the fact that $h$ preserves the empty meet $1$, while (L2) is the condition on $f$ equivalent to the fact that
$h$ preserves binary meets.

It is usual to denote the left adjoint (when it exists) of a map $f$ by $f^*$. We shall do it from now on.

\subsection{Sublocales.} A {\em sublocale} of a locale $L$ is a subset $S\sue L$ that satisfies the following conditions  (\cite{PP12}):
\begin{enumerate}[(S1)]
\item
For every $M \subseteq S$ the meet $\tbigwedge M$ lies in $S$.
\item
For every $s\in S$ and every $x\in L$, $x \to s$ lies in $S$.
\end{enumerate}
A sublocale $S$ is itself a locale with meets computed as in $L$ (by (S1)) but with possibly different joins; by (S2), its
Heyting operation is the restriction of the $\to$ from $L$.

The system $\SL(L)$ of all sublocales of $L$ is a {\em coframe} (that is, the dual of frame) with a fairly transparent structure:
\begin{equation}\tag{2.4.1}\label{joinsub}
\tbigwedge S_i
= \tbigcap S_i
\qtq{and} \tbigvee S_i
= \setof{\tbigwedge M}{M\subseteq \tbigcup S_i}.
\end{equation}
The least sublocale $\tbigvee \emptyset  = \{1\}$ is designated by $\OS$ and referred to as the
{\em void sublocale}; the largest sublocale is, of course, $L$.

Being a coframe, $\SL(L)$ is naturally endowed with a co-Heyting operation, the {\em difference}
$R \smin S$ (\cite{FPP}) satisfying
\[
R\smin S\leq T \qtq{iff} R\leq S\vee T,
\]
and it has co-pseudocomplements (usually called {\em supplements}) $S^\#=L\smin S$ (the smallest $T$ such that $S\vee T=L$). In case $S$ is complemented, $S^\#$ is the complement of $S$ and we shall denote it by $S\co$.

\subsection{Open and closed sublocales.} With  $a\in L$ we associate the {\em open} and {\em closed}
sublocales
\[
\of(a)=\setof{x}{a\to x=x}=\setof {a\to x}{x\in L} \qtq{and} \cf(a)=\ur a=\{x\in L\mid x\ge a\}
\]
(for the equality in the first use (H3)).
For each $a\in L$,  $\of(a)$ and $\cf(a)$ are complements of each other in $\SL(L)$.
Moreover,
\[
\begin{aligned}
&\of(0)=\OS, \of(1)=L,\ \ \of(a)\cap\of(b)=\of(a\wedge b) \ \text{ and }\ \tbigvee\of(a_i)=\of(\tbigvee a_i),\\
&\cf(1)=\OS, \cf(0)=L,\ \ \cf(a)\vee\cf(b)=\cf(a\wedge b) \ \text{ and }\ \tbigcap\cf(a_i)=\cf(\tbigvee a_i).
\end{aligned}
\]

Similarly like in spaces we have the {\em closure}, the smallest closed sublocale containing $S$,
\begin{equation}
\clr S=\tbigcap\setof{\cf(a)}{S\sue\cf(a)}=\cf(\bim S). \tag{2.5.1}
\end{equation}

Using the fact that $S^{\#\#}\sue S$ we immediately see that $\of(a)\sue S$ iff $S^{\#}\sue\cf(a)$ and obtain the formula for the {\em interior} of $S$,
\begin{equation}
\intr S=\tbigvee\setof{\of(a)}{\of(a)\sue S}=\of(\tbigvee\{a\mid S^{\#}\sue\cf(a) \})=\of(\bim S^\#). \tag{2.5.2}
\end{equation}

We have to be careful here: since $\SL(L)$ is not a Boolean algebra in general, some of the usual formulas relating the interior and closure are no longer valid in locales. Indeed, one only has (see e.g. \cite{FPP})
\[\clr(L\smin A)=L\smin \intr A \quad \mbox{ and } \quad \intr(L\smin A)\supseteq L\smin\clr A.
\]

\subsection{Meet-preserving maps in locales}
 Let $f\colon L\to M$ be a (plain) map between locales such that
\begin{equation}\tag{2.6.1}\label{cond1}
\forall b\in M\ \exists a\in L\ \colon\ f^{-1}[\cf(b)]=\cf(a).
\end{equation}
Of course, the $a$ is necessarily unique and $f$ has a left adjoint $f^* \colon M\to L$ given by the assignment $b\mapsto a$:
\[y\le f(x)\Leftrightarrow f(x)\in\cf(y)\Leftrightarrow x\in f^{-1}[\cf(y)]=\cf(f^*(y))\Leftrightarrow f^*(y)\le x.\]
(In particular, $f$ and $f^*$ are order-preserving.)

Conversely, we also have:

\begin{propsub}
\label{galois}
Let $f\colon L\to M$ be a map between locales. If $f$ is a right adjoint, with left adjoint $f^*$, then $f^{-1}[\cf(b)]=\cf(f^*(b))$ for every  $b\in M$.
\end{propsub}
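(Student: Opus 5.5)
The plan is to prove the set equality $f^{-1}[\cf(b)]=\cf(f^*(b))$ for a fixed $b\in M$ by unfolding both sides into order statements about elements of $L$. The only ingredients are the definition $\cf(a)=\ur a$ of closed sublocales and the adjunction $f^*\dashv f$, i.e.
\[f^*(y)\le x \iff y\le f(x)\qquad\text{for all } x\in L,\ y\in M.\]
In effect, this reverses the chain of equivalences displayed just before the statement.

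First, for an arbitrary $x\in L$, membership in the left-hand side unfolds as follows: $x\in f^{-1}[\cf(b)]$ iff $f(x)\in\cf(b)$ iff $b\le f(x)$.

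Second, applying the adjunction with $y=b$, the condition $b\le f(x)$ is equivalent to $f^*(b)\le x$. By the definition of $\cf(a)$ with $a=f^*(b)$, this says exactly that $x\in\cf(f^*(b))$.

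Chaining these equivalences gives the two inclusions at once, and hence the equality.

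There is no genuine obstacle. The only point to watch is the direction of the adjunction: $f$ is the right adjoint, so the inequality $y\le f(x)$ sits on the side of $f$, and the membership condition $b\le f(x)$ for $\cf(b)$ matches it exactly. No meet-preservation or frame structure is needed beyond the existence of the left adjoint $f^*$.
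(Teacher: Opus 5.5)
Your proof is correct and is exactly the paper's argument: the single chain $x\in f^{-1}[\cf(b)]$ iff $b\le f(x)$ iff $f^*(b)\le x$ iff $x\in\cf(f^*(b))$. Like the paper, it uses nothing beyond the adjunction $f^*\dashv f$ and the definition $\cf(a)=\ur a$.
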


\begin{proof}
$x\in f^{-1}[\cf(b)]$ iff $ b\le f(x)$ iff $ f^*(b)\le x$ iff $ x\in \cf(f^*(b))$.
\end{proof}

Hence:

\begin{corosub}\label{corogalois}
For any locales  $L, M$,
a map $f\colon L\to M$ is a right adjoint  \textup(that is, preserves arbitrary meets\textup) if and only if condition \eqref{cond1} holds.
\end{corosub}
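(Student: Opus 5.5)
The statement has two directions, and each is essentially already on the page. I will treat them separately and then note why the parenthetical ``preserves arbitrary meets'' is equivalent to being a right adjoint.

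\emph{Sufficiency.} Suppose condition \eqref{cond1} holds. The computation displayed just before Proposition~\ref{galois} gives the adjunction directly. Sending $b\mapsto a$, where $a$ is the unique element with $f^{-1}[\cf(b)]=\cf(a)$, defines a map $f^*\colon M\to L$. Uniqueness of $a$ holds because $\cf(a)=\ur a$ determines $a$ as its least element. For all $x\in L$ and $y\in M$ the chain
\[y\le f(x)\iff x\in f^{-1}[\cf(y)]=\cf(f^*(y))\iff f^*(y)\le x\]
then holds. So $f^*\dashv f$, and $f$ is a right adjoint. Monotonicity of $f$ and $f^*$ comes for free from the adjunction, or directly: $x\le x'$ with $x\in f^{-1}[\cf(f(x))]$ gives $x'\in\cf(f^*(f(x)))$, and one can also read it off the equivalence.

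\emph{Necessity.} If $f$ is a right adjoint with left adjoint $f^*$, then Proposition~\ref{galois} gives $f^{-1}[\cf(b)]=\cf(f^*(b))$ for every $b\in M$. Taking $a=f^*(b)$ verifies \eqref{cond1}.

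\emph{The parenthetical.} The only point needing a word is the identification ``right adjoint $=$ preserves arbitrary meets''. Right adjoints preserve all existing infima, by item (2) of the introduction. Conversely, $L$ and $M$ are complete lattices, so a map preserving all meets is a right adjoint, by the dual of item (3). Concretely, its left adjoint is $f^*(b)=\tbigwedge\{x\mid b\le f(x)\}$. Here one checks $b\le f(f^*(b))$ using meet-preservation, and monotonicity of $f$ follows from preservation of binary meets. I do not expect any real obstacle; the whole argument is a short assembly of the displayed computation and Proposition~\ref{galois}.
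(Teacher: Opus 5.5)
Your proposal is correct and follows the paper's own argument: sufficiency is the computation displayed just before Proposition~\ref{galois}, and necessity is Proposition~\ref{galois} with $a=f^*(b)$. One minor citation slip: the fact that a meet-preserving map between complete lattices is a right adjoint is the second half of item (3) in the introduction itself, not its dual.
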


\subsection{Images and preimages.} \label{images}
Let $f\colon L\to M$ be a localic map and $S\sue L$ and $T\sue M$ sublocales.
Then the standard set theoretic image $f[S]$ is easily seen to be a sublocale of $M$. The standard $f^{-1}[T]$ is generally not a sublocale, but it is closed under meets and hence, by \eqref{joinsub}, there is the  largest sublocale contained in it, namely the join $\tbigvee\setof{S\in\SL(L)}{S\sue f^{-1}[T]}$. We denote it by \[f_{-1}[T]\] and call it the {\em localic preimage}
 of $T$ (as
opposed to the {\em set theoretic preimage} $f^{-1}[T]$). There is the obvious   adjunction
\[
f[S]\sue T \qtq{iff} S\sue f_{-1}[T].
\]
In particular, $f[-]$ preserves joins and $f_{-1}[-]$ preserves meets (=intersections) of sublocales. Furthermore, $f_{-1}[-]$ is a coframe homomorphism and hence $f[-]$ is a colocalic map (see \cite{PP12} for details). Moreover, $f_{-1}[\OS]=\OS$ (because already
$f^{-1}[\OS]=\OS$, by (L1)).

Localic preimages of closed (resp. open) sublocales are closed (resp. open), more precisely,
\begin{equation}\tag{2.7.1}
f_{-1}[\cf(b)]=\cf(f^*(b)) \quad\mbox{ and }\quad f_{-1}[\of(b)]=\of(f^*(b))
\end{equation}
where $f^*\colon M\to L$ stands for the left adjoint of $f$. The proof of the first identity is obvious; we include here a proof of the
second one, much shorter than the ones in the literature (e.g. \cite{PP08,PP12,PP21}):

By (L2), $\of(f^*(b))\subseteq f^{-1}[\of(b)]$ hence $\of(f^*(b))\subseteq f_{-1}[\of(b)]$.
On the other hand
\[f_{-1}[\of(b)]\cap f_{-1}[\cf(b)]=f_{-1}[\of(b)\cap \cf(b)]=f_{-1}[\OS]=\OS\]
hence $f_{-1}[\of(b)]\subseteq f_{-1}[\cf(b)]\co=\of(f^*(b))$.

\medskip
More generally, for any meet-preserving $f$ and any $T$ closed under meets, $f^{-1}[T]$ is closed under meets and hence
we have a largest sublocale $f_{-1}[T]$  contained in $f^{-1}[T]$. We will use this notation in this more general context and to avoid confusion
state that ``the localic preimage
$f_{-1}[T]$ makes sense''.

\section{Adjunctions via closure and interior}

\subsection{Type I: Characterizing the preservation of meets}

It will be of advantage to extend the closure operator on sublocales to general subsets $S$ of $L$.

Let $\cf L$ denote the set of all closed sublocales of $L$. This is a sub-coframe of the coframe $\mathsf{S}(L$).
For any {\bf subset} $S\subseteq L$, let $\clr S$ denote the closed sublocale $$\tbigcap\{\cf(a)\in\cf L\mid S\subseteq \cf(a)\}=\cf(\tbigvee\{a\in L\mid S\subseteq \cf(a)\})=\cf(\bim S).$$
This defines a map from $\mathcal P(L)$ to $\cf L$ with the properties of a closure operator:
\begin{itemize}
\item {\em inflationary}: $S\subseteq \clr S$.
\item {\em order-preserving}: $S\subseteq T\ \Rightarrow \ \clr S\subseteq \clr T$.
\item {\em idempotent}: $\clr(\clr S)=\clr S$.
\end{itemize}
Clearly, one has the equivalence
\[S \subseteq \clr T \Leftrightarrow \clr S \subseteq \clr T \mbox{ for every }T, S \subseteq L.\]

\medskip
For each plain map $f\colon L\to M$  between locales consider the following:

\bigskip
\begin{center}
$\xymatrix@C=45pt@R=-7pt{& & & \cf(a) \ar@/^.6pc/@{|..>}[r]_{}  &  \clr(f[\cf(a)]) \\
\cf L \ar@/^1.5pc/[rr]^{f_\cf^{\to}}   & \bot & \cf M \ar@/^1.5pc/[ll]^{f_\cf^{\leftarrow}} & & \\
& & & \clr(f^{-1}[\cf(b)]) &    \cf(b) \ar@/^.6pc/@{|..>}[l]_{}\    }
$
\\[4mm]
{\sf Adjoint pair I: Proposition \ref{p313}}
\end{center}

\bigskip
\begin{remasub} Note that $\cf(f(a))\subseteq \clr(f[\cf(a)])$ holds always. Moreover,
 $f[\cf(a)]\subseteq \cf(f(a))$ iff \[x\ge a\ \Rightarrow \ f(x)\ge f(a).\] Therefore $f[\cf(a)]\subseteq \cf(f(a))$ for every $a$ iff $f$ is order-preserving. Hence
  if $f$ is order-preserving then, in the first assignment, $\clr(f[\cf(a)])$ is always equal to $\cf(f(a))$. In this case, it then follows immediately that
  \begin{equation}
  \tag{3.1.1}
  \cf(f(a))\subseteq \cf(b)\mbox{ iff } f[\cf(a)]\subseteq \cf(b).
  \end{equation}

On the other hand,
$f[\cf(a)]\supseteq \cf(f(a))$ iff
\[y\ge f(a)\ \Rightarrow \ y=f(x) \mbox{ for some }x\ge a\] (that is, ontoness in $\cf(f(a))$ w.r.t. $\cf(a)$).
\end{remasub}

The proof of the following result is straightforward, we skip it.

\begin{lemmsub}\label{lemadj1}
 For a plain map $f\colon L\to M$, the following are equivalent:
\begin{enumerate}[\em(i)]
\item
The pair $(f_\cf^\to,f_\cf^{\leftarrow})$ is an adjoint pair.
\item $\clr(f[\cf(a)]) \subseteq \cf(b)$ iff $\cf(a)\subseteq \clr(f^{-1}[\cf(b)])$ for every $a\in L$ and $b\in M$.
\item $f[\cf(a)] \subseteq \cf(b)$ iff $\cf(a)\subseteq \clr(f^{-1}[\cf(b)])$ for every $a\in L$ and $b\in M$.
\item $\cf(a)\subseteq f^{-1}[\cf(b)]$ iff $\cf(a)\subseteq \clr(f^{-1}[\cf(b)])$ for every $a\in L$ and $b\in M$.
\end{enumerate}
Moreover, if $f$ is order-preserving then the above conditions are also equivalent to each one of the following conditions:
\medskip
\begin{enumerate}[\em(i)]
\item[\em(v)] $f(a)\ge b$ iff $\cf(a)\subseteq \clr(f^{-1}[\cf(b)])$ for every $a\in L$ and $b\in M$.
\item[\em(vi)] $f(a)\ge b$ iff $a\in \clr(f^{-1}[\cf(b)])$ for every $a\in L$ and $b\in M$.
\item[\em(vii)] $a\in f^{-1}[\cf(b)]$ iff $a\in \clr(f^{-1}[\cf(b)])$ for every $a\in L$ and $b\in M$.
\item[\em(viii)] $a\in \clr(f^{-1}[\cf(b)])\ \Rightarrow \ a\in f^{-1}[\cf(b)]$ for every $a\in L$ and $b\in M$.
\end{enumerate}
\end{lemmsub}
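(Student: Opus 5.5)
The plan is to pass between the conditions using only two facts: the closure property $S\subseteq \clr T\Leftrightarrow \clr S\subseteq \clr T$, together with the fact that a closed sublocale is its own closure; and, in the order-preserving case, Remark 3.1.1.

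First, (i)$\Leftrightarrow$(ii) is just the definition of an adjoint pair between the posets $\cf L$ and $\cf M$. Every element of $\cf L$ has the form $\cf(a)$ and every element of $\cf M$ the form $\cf(b)$. The maps are $f_\cf^\to(\cf(a))=\clr(f[\cf(a)])$ and $f_\cf^\leftarrow(\cf(b))=\clr(f^{-1}[\cf(b)])$. Monotonicity of the two maps is not needed separately, since a Galois correspondence $\phi(x)\le y\iff x\le\psi(y)$ already forces it: from $\phi(x)\le\phi(x)$ we get $x\le\psi\phi(x)$, and so on.

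Next, (ii)$\Leftrightarrow$(iii) holds because $\cf(b)$ is closed, so $\clr(f[\cf(a)])\subseteq\cf(b)$ iff $f[\cf(a)]\subseteq\cf(b)$ by the stated closure equivalence. Then (iii)$\Leftrightarrow$(iv) holds because $f[\cf(a)]\subseteq\cf(b)$ iff $\cf(a)\subseteq f^{-1}[\cf(b)]$, which is the set-theoretic image/preimage adjunction.

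Now assume $f$ is order-preserving. By (3.1.1), $f[\cf(a)]\subseteq\cf(b)$ iff $\cf(f(a))\subseteq\cf(b)$, that is, iff $f(a)\ge b$. This gives (iii)$\Leftrightarrow$(v).

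For (v)$\Leftrightarrow$(vi), note that $\clr(f^{-1}[\cf(b)])$ is a closed sublocale, hence an up-set. Therefore $\cf(a)={\uparrow}a$ is contained in it iff $a$ belongs to it.

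For (vi)$\Leftrightarrow$(vii), $a\in f^{-1}[\cf(b)]$ is literally $f(a)\ge b$.

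Finally, (vii)$\Leftrightarrow$(viii) holds because the reverse implication in (vii) is automatic from inflationarity $S\subseteq\clr S$. So (vii) reduces to the single implication (viii).

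There is no real obstacle here. The only points needing a moment's care are two: recognizing that the order-preservation hypothesis is used precisely for (iii)$\Leftrightarrow$(v) via Remark 3.1.1, and noting that closed sublocales are up-sets in the step (v)$\Leftrightarrow$(vi).
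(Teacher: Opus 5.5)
Your proposal is correct. The paper omits this proof as straightforward, and your chain (i)$\Leftrightarrow$(ii)$\Leftrightarrow$(iii)$\Leftrightarrow$(iv), together with (iii)$\Leftrightarrow$(v)$\Leftrightarrow$(vi)$\Leftrightarrow$(vii)$\Leftrightarrow$(viii) for order-preserving $f$, is exactly the intended routine verification. It uses the same moves the paper later relies on in the proof of Theorem~\ref{t327}: closedness of $\cf(b)$, the set-theoretic image/preimage adjunction, (3.1.1), and the fact that $\cf(a)\subseteq C$ iff $a\in C$ for a closed $C$.
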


\begin{propsub}\label{p313}
Let $f\colon L\to M$ be a plain map between locales.
The pair $(f_\cf^\to,f_\cf^{\leftarrow})$ is an adjoint pair if and only if $f$ preserves arbitrary meets.
\end{propsub}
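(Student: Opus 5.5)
The plan is to route both directions through Corollary \ref{corogalois}, which says that $f$ preserves arbitrary meets iff \eqref{cond1} holds, that is, iff every set-theoretic preimage $f^{-1}[\cf(b)]$ is a closed sublocale. Lemma \ref{lemadj1} supplies the bridge: by (iv), the pair $(f_\cf^\to,f_\cf^{\leftarrow})$ is adjoint iff
\[\cf(a)\subseteq f^{-1}[\cf(b)] \iff \cf(a)\subseteq \clr(f^{-1}[\cf(b)])\]
for all $a\in L$, $b\in M$.

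\emph{Sufficiency.} Assume $f$ preserves arbitrary meets. By Proposition \ref{galois}, $f^{-1}[\cf(b)]=\cf(f^*(b))$ is already closed, so $\clr(f^{-1}[\cf(b)])=f^{-1}[\cf(b)]$. Then both sides of (iv) coincide and the pair is adjoint.

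\emph{Necessity.} Assume the pair is adjoint, and fix $b\in M$. Put $P=f^{-1}[\cf(b)]$; note $\clr P=\cf(\tbigwedge P)$. I will prove $\clr P\subseteq P$, which gives $P=\cf(\tbigwedge P)$ and hence \eqref{cond1}. Let $x\in \clr P=\cf(\tbigwedge P)$. Then $\cf(x)\subseteq \cf(\tbigwedge P)=\clr P$, and (iv) yields $\cf(x)\subseteq P$. Since $x\in\cf(x)$, we get $x\in P$, as required. Corollary \ref{corogalois} now shows that $f$ preserves arbitrary meets.

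This argument never needs $f$ to be order-preserving beforehand, which matters because only (i)--(iv) of Lemma \ref{lemadj1} hold for plain maps. Order-preservation comes out automatically, since $f$ ends up a right adjoint.

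The main obstacle is justifying Lemma \ref{lemadj1}, which the paper leaves unproved. The equivalence (i)$\Leftrightarrow$(ii) is the definition of adjointness restricted to closed sublocales. (ii)$\Leftrightarrow$(iii) follows from the displayed equivalence $S\subseteq\clr T\Leftrightarrow \clr S\subseteq\clr T$, applied with $T=\cf(b)$ closed. (iii)$\Leftrightarrow$(iv) uses $f[\cf(a)]\subseteq \cf(b)\iff \cf(a)\subseteq f^{-1}[\cf(b)]$, which is pure set theory. With these checked, the proof is complete.
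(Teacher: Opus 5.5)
Your proof is correct and follows the paper's argument: both directions go through Lemma~\ref{lemadj1}(iv) and Corollary~\ref{corogalois}, and the easy direction uses that $f^{-1}[\cf(b)]=\cf(f^*(b))$ is already closed. The only cosmetic difference is in necessity: the paper applies (iv) once, with $\cf(a)=\clr(f^{-1}[\cf(b)])$, to get $\clr(f^{-1}[\cf(b)])\subseteq f^{-1}[\cf(b)]$ directly, while you apply (iv) to each $\cf(x)$ with $x\in\clr P$, which amounts to the same thing.
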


\begin{proof}
$\Leftarrow$: By Cor. \ref{corogalois}, condition (iv) of the lemma is trivially satisfied.

\medskip
\noindent
$\Rightarrow$: Let $\cf(a)=\clr(f^{-1}[\cf(b)])$. It follows from Lemma \ref{lemadj1}(iv) that $\clr(f^{-1}[\cf(b)])\subseteq f^{-1}[\cf(b)]$ and thus, by Cor. \ref{corogalois}, $f$ preserves arbitrary meets.
\end{proof}

\begin{corosub}\label{coradj1}
For a plain map $f\colon L\to M$, the following are equivalent:
\begin{enumerate}[\em(i)]
\item $(f_\cf^\to,f_\cf^{\leftarrow})$ is an adjoint pair.
\item $f$ preserves arbitrary meets.
\item $\forall b\in M\ \exists a\in L \colon\ f^{-1}[\cf(b)]=\cf(a)$.
\end{enumerate}
And when these conditions hold then $f$ has a left adjoint $f^*$ satisfying
\begin{enumerate}[\em(i)]
\item[\em(iv)] $\forall b\in M, \ f^{-1}[\cf(b)]=\cf(f^*(b))$.
\end{enumerate}
\end{corosub}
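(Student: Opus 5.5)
The corollary is essentially a repackaging of Proposition \ref{p313}, Corollary \ref{corogalois} and Proposition \ref{galois}, so the plan is simply to chain these together.

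First, (i)$\Leftrightarrow$(ii) is exactly the content of Proposition \ref{p313}: the pair $(f_\cf^\to,f_\cf^{\leftarrow})$ is an adjoint pair precisely when $f$ preserves arbitrary meets. Next, (ii)$\Leftrightarrow$(iii) is Corollary \ref{corogalois}. This uses the fact that a map between complete lattices preserves all meets iff it is a right adjoint, and the corollary shows this is equivalent to condition \eqref{cond1}, which is (iii). No new argument is needed for either equivalence.

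For the final assertion, I assume the equivalent conditions hold. By (ii), $f$ preserves arbitrary meets, so it is a right adjoint between complete lattices. Its left adjoint $f^*$ is then given by $f^*(b)=\tbigwedge\{x\in L\mid b\le f(x)\}$. Proposition \ref{galois} yields $f^{-1}[\cf(b)]=\cf(f^*(b))$ for every $b\in M$, which is (iv).

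Alternatively, the left adjoint can be read off directly from (iii). Define $f^*(b)$ as the unique $a$ with $f^{-1}[\cf(b)]=\cf(a)$; the element $a$ is unique because $\cf(a)=\ur a$ determines $a$. The chain of equivalences displayed just before Proposition \ref{galois} then shows this $f^*$ is left adjoint to $f$. By uniqueness of adjoints, the two descriptions of $f^*$ agree.

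There is no genuine obstacle, since every step invokes a previously stated result. The only point needing care is to check that the $f^*$ appearing in (iv) is the same left adjoint as in Proposition \ref{galois}, which follows from uniqueness of adjoints.
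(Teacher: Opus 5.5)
Your proposal is correct and is essentially the paper's argument: the paper gives no separate proof because the corollary just combines Proposition~\ref{p313} for (i)$\Leftrightarrow$(ii), Corollary~\ref{corogalois} for (ii)$\Leftrightarrow$(iii), and Proposition~\ref{galois} for (iv). Your check that the $f^*$ read off from (iii) coincides with the left adjoint of $f$ (by uniqueness of adjoints) matches the observation the paper makes just before Proposition~\ref{galois}.
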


Now, we can rephrase the characterization of localic maps from \cite[Cor. 4.8]{EPP22} (cf. \cite[Thm. 6.5.2]{PP22}) in the following way:

\begin{theosub}
A plain map $f\colon L\to M$ between locales is a localic map if and only if
\begin{enumerate}[\em(1)]
\item
$(f_\cf^\to,f_\cf^{\leftarrow})$ is an adjoint pair,
\item $f_\cf^\leftarrow[\OS]=\OS$, and
\item $f_\cf^\leftarrow[\cf(b)]\co\subseteq f^{-1}[\of(b)]$ for every $b\in M$.
\end{enumerate}
\end{theosub}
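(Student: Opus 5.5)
By Corollary \ref{coradj1}, condition (1) is equivalent to $f$ preserving arbitrary meets. In that case $f$ has a left adjoint $f^*$ and $f_\cf^\leftarrow(\cf(b))=\clr(f^{-1}[\cf(b)])=\clr\cf(f^*(b))=\cf(f^*(b))$. So the plan is to work throughout under the assumption that $f$ is meet-preserving. Conditions (2) and (3) should then translate into (L1) and (L2) from Subsection \ref{locmaps}, and the equivalence will follow from the characterization of localic maps recalled there. Here (2) is read as $f_\cf^\leftarrow(\OS)=\OS$, with $\OS=\cf(1)$.

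For (2): we have $f_\cf^\leftarrow(\cf(1))=\cf(f^*(1))$, and this equals $\OS=\cf(1)$ iff $f^*(1)=1$. By adjointness, $f^*(1)=1$ iff ($1\le f(x)\Rightarrow 1\le x$), that is, iff $f(x)=1$ implies $x=1$. This is exactly (L1).

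For (3): since $\cf(f^*(b))\co=\of(f^*(b))$, condition (3) says $\of(f^*(b))\subseteq f^{-1}[\of(b)]$ for every $b\in M$. Using $\of(a)=\{a\to x\mid x\in L\}$, this becomes
\[
f(f^*(b)\to x)\in\of(b), \quad\text{i.e.}\quad b\to f(f^*(b)\to x)=f(f^*(b)\to x)\quad\text{for all } b\in M,\ x\in L.
\]

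\emph{(L2) $\Rightarrow$ (3).} Under (L2), $f(f^*(b)\to x)=b\to f(x)$. By (H3), $b\to(b\to f(x))=b\to f(x)$, so (3) holds. This is the observation already used in the paper's proof of (2.7.1).

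\emph{(3) $\Rightarrow$ (L2).} This is the step I expect to need care. We must show $f(f^*(b)\to x)=b\to f(x)$. For the inequality $\le$, by the Heyting adjunction it suffices to show $b\wedge f(f^*(b)\to x)\le f(x)$. Using the unit $b\le ff^*(b)$ and the fact that $f$ preserves meets,
\[
b\wedge f(f^*(b)\to x)\le f(f^*(b))\wedge f(f^*(b)\to x)=f\bigl(f^*(b)\wedge(f^*(b)\to x)\bigr)\le f(x).
\]
This direction holds for any meet-preserving $f$. For $\ge$, we have $f^*(b\to f(x))\wedge f^*(b)\le f^*(b\wedge f(x))\le f^*f(x)\le x$; the first inequality is where preservation of binary meets by $f^*$ is needed, which is exactly what is unavailable a priori. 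So instead I would use (3). Put $y=f^*(b)\to x$. Since $x\le y$, monotonicity gives $f(x)\le f(y)$, hence
\[
b\to f(x)\le b\to f(y)=f(y),
\]
where the last equality is (3). This is the desired $\ge$. So (3) supplies precisely the missing half of (L2), and the two directions together give the equivalence.
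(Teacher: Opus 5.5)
Your proof is correct, but it takes a different route from the paper.

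\textbf{The paper's route.} The paper never touches (L1)/(L2). It invokes the characterization of \cite[Cor.~4.8]{EPP22}: $f$ is localic iff
\begin{itemize}
\item $f^{-1}[\OS]=\OS$,
\item preimages $f^{-1}[\cf(b)]$ are closed, and
\item $f^{-1}[\of(b)]\supseteq f^{-1}[\cf(b)]\co$.
\end{itemize}
It then matches these with (2), (1) and (3), using Corollary~\ref{coradj1} and the observation that under (1) one has $f_\cf^\leftarrow(\cf(b))=f^{-1}[\cf(b)]$.

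\textbf{Your route.} You reduce instead to the description of localic maps as meet-preserving maps satisfying (L1) and (L2). You then verify directly that, once $f$ preserves meets, (2) is exactly $f^*(1)=1$ and (3) is exactly (L2). Your splitting of (L2) into two inequalities carries the real content that the paper leaves inside the cited result:
\begin{itemize}
\item the inequality $f(f^*(b)\to x)\le b\to f(x)$, valid for every meet-preserving $f$;
\item the reverse inequality, which is precisely what (3) supplies via $x\le f^*(b)\to x$.
\end{itemize}
Working through $f^*$ also settles the equivalence of (2) with (L1) cleanly. The paper's ``clearly'' for (a)$\Leftrightarrow$(2) silently needs (1), since $\clr\emptyset=\OS$.

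\textbf{What each buys.} Your argument is self-contained: it needs only Section~2 and the Heyting rules. It also shows more precisely that conditions (2) and (3) correspond separately to preservation of the top and of binary meets by $f^*$. The paper's argument is shorter and presents the theorem, as intended, as a mere rephrasing of the EPP22 characterization in the language of the adjunction $f_\cf^\to\dashv f_\cf^\leftarrow$.
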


\begin{proof} The result in \cite{EPP22} asserts that $f$ is a localic map iff
\begin{enumerate}[(a)]
\item $f^{-1}[\OS]=\OS$,
\item
$f^{-1}[A]$ is closed for every closed sublocale $A$ of $M$, and
\item $f^{-1}[U]\supseteq f^{-1}[U\co]\co$ for every open sublocale $U$ of $M$
\end{enumerate}
so it suffices to check that (1)\&(2)\&(3) $\Leftrightarrow$ (a)\&(b)\&(c).

Condition (a) is clearly equivalent to (2) and, by  Cor. \ref{coradj1}, (b) is equivalent to (1). Then, under (b) or (1), we have always $f_\cf^\leftarrow[\cf(b)]=\clr(f^{-1}[\cf(b)])=f^{-1}[\cf(b)]$ and the equivalence (c)$\Leftrightarrow$(3) follows.
\end{proof}

\subsection{Type II:  characterizing continuity }

Let $L$ be a complete lattice. From now on we shall refer to subsets of $L$ that are closed under meets as {\em meet-subsets} of $L$.
The system of all meet-subsets in $L$
will be denoted by
$$
\MC(L).
$$
Intersections of meet-subsets are meet-subsets. Consequently,
$\MC(L)$
is a  complete lattice. The joins in $\MC(L)$ are also given by the formula \eqref{joinsub} and
its bottom  is again
$
\OS=\set{1}.
$

For each frame $L$, let $\of L$ denote the set of all open sublocales of $L$ and,
for any {\bf subset} $S\subseteq L$, let $\intr S$ denote the open sublocale $$\tbigvee\{\of(a)\in\of L\mid \of(a)\subseteq S\}=\of(\tbigvee\{a\in L\mid \of(a)\subseteq S\}).$$
This defines a mapping from $\mathcal P(L)$ to $\of L$ with two of the properties  of interior operators:
\begin{itemize}
\item {\em order-preserving}: $S\subseteq T\ \Rightarrow \ \intr S\subseteq \intr T$.
\item {\em idempotent}: $\intr(\intr S)=\intr S$.
\end{itemize}
However,
$\intr S$ is not necessarily contained in $S$ because the join above is taken in $\SL(L)$ (e.g. take any $S$ that does not contain the top element 1). Hence, we may not have the implication $\of(a)\subseteq \intr S\ \Rightarrow \ \of(a)\subseteq S$.
Nevertheless, for $$S^\wedge=\tbigcap\{T\in \MC(L)\mid T\supseteq S\}\in\MC(L)$$ we have the following:

\begin{propsub} For every $S\subseteq L$,
$\intr S\subseteq S^\wedge$.
\end{propsub}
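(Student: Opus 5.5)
The plan is to show that every element of $\intr S$ is a meet of elements of $S$, and that each such meet lies in $S^\wedge$. Set $c=\tbigvee\{a\in L\mid \of(a)\subseteq S\}$, so that $\intr S=\of(c)=\tbigvee\{\of(a)\mid \of(a)\subseteq S\}$, with the join taken in $\SL(L)$.

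First, describe $S^\wedge$ concretely. I claim
\[
S^\wedge=\setof{\tbigwedge M}{M\subseteq S}.
\]
The right-hand side contains $S$, since each $s=\tbigwedge\{s\}$. It is also closed under meets: a meet of meets of subsets of $S$ is the meet of their union, which is again a subset of $S$. Conversely, every meet-subset containing $S$ contains all such meets. So this set is the least meet-subset containing $S$. It contains $\tbigwedge\emptyset=1$, so $\OS\subseteq S^\wedge$ even when $1\notin S$; this is exactly the situation where $\intr S\not\subseteq S$ can happen.

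Second, use the join formula \eqref{joinsub} in $\SL(L)$:
\[
\intr S=\tbigvee_{\of(a)\subseteq S}\of(a)=\setof{\tbigwedge M}{M\subseteq \tbigcup\{\of(a)\mid \of(a)\subseteq S\}}.
\]
Every set $M$ occurring on the right satisfies $M\subseteq\tbigcup\{\of(a)\mid\of(a)\subseteq S\}\subseteq S$. Hence $\tbigwedge M\in S^\wedge$ by the first step, and $\intr S\subseteq S^\wedge$ follows.

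The only point needing care is the empty index case. If no $\of(a)$ is contained in $S$, the join is $\OS=\{1\}$; this corresponds to $M=\emptyset$ in the formula, and $1\in S^\wedge$ covers it. I do not expect a real obstacle: the argument just unwinds the two descriptions of joins, in $\SL(L)$ and in $\MC(L)$, which are given by the same formula \eqref{joinsub}.
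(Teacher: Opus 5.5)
Your proof is correct and follows essentially the paper's argument: both unwind the join formula \eqref{joinsub} in $\SL(L)$ to see that every element of $\intr S$ is a meet of a subset of $\tbigcup\{\of(a)\mid \of(a)\subseteq S\}\subseteq S$. The only difference is cosmetic. You first identify $S^\wedge$ explicitly as $\{\tbigwedge M\mid M\subseteq S\}$, whereas the paper checks directly that $\intr S\subseteq T$ for an arbitrary meet-subset $T\supseteq S$.
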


\begin{proof}
Let $T\supseteq S$, $T\in\MC(L)$. We need to check that $\intr S\subseteq T$. But $$\intr S =\{\tbigwedge A\mid A\subseteq \tbigcup\{\of(a)\mid \of(a)\subseteq S\}\}$$ and each of those $A$ is contained in $S\subseteq T$ hence $\tbigwedge A\in T$.
\end{proof}

This proof also shows that
$$
\mbox{\em $\intr S\subseteq S$ whenever $S\in\MC(L)$}
$$
and thus $$\of(a)\subseteq S \ \mbox{ iff } \ \of(a)\subseteq \intr S\mbox{ \em for any }S\in\MC(L).$$

\bigskip
The following result is obvious.

\begin{lemmsub}\label{meetsets}
Let $f\colon L\to M$ be a meet-preserving map between locales and let $S\in \MC(L)$ and $T\in\MC(M)$. Then  $f[S]\in\MC(M)$ and $f^{-1}[T]\in\MC(L)$ and we have again an adjunction
\begin{center}
$\xymatrix@C=35pt{
\MC(L) \ar@/^1.2pc/[rr]^{f[-]}   & \bot &   \MC(M). \ar@/^1.2pc/[ll]^{f^{-1}[-]} }$
\end{center}
\end{lemmsub}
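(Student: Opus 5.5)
We need three things: $f[S]$ is a meet-subset of $M$, $f^{-1}[T]$ is a meet-subset of $L$, and $f[S]\subseteq T$ iff $S\subseteq f^{-1}[T]$. The plan is to check these in that order.

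For the image, take an arbitrary $B\subseteq f[S]$. Choose, for each $y\in B$, some $x_y\in S$ with $f(x_y)=y$. This gives $A=\{x_y\mid y\in B\}\subseteq S$ with $f[A]=B$. Since $S\in\MC(L)$, $\tbigwedge A\in S$. Since $f$ preserves arbitrary meets, $\tbigwedge B=\tbigwedge f[A]=f(\tbigwedge A)$, and this lies in $f[S]$. One could avoid the choice of preimages by writing $A=S\cap f^{-1}[B]$: then $A\subseteq S$ and $f[A]=B$ because $B\subseteq f[S]$. I will use this version. The case $B=\emptyset$ needs no separate treatment: $f(1)=1$ is the empty meet, and $1\in S$ as the empty meet of $S$, so $1\in f[S]$.

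For the preimage, take $A\subseteq f^{-1}[T]$. Then $f(\tbigwedge A)=\tbigwedge f[A]$, and $f[A]\subseteq T$, so this meet lies in $T$ because $T\in\MC(M)$. Hence $\tbigwedge A\in f^{-1}[T]$.

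The adjunction itself is the purely set-theoretic fact that $f[S]\subseteq T$ iff $S\subseteq f^{-1}[T]$, valid for arbitrary subsets. Both maps are clearly monotone, so they form a Galois adjunction between the complete lattices $\MC(L)$ and $\MC(M)$.

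The only point needing care is the image step: the meet-preservation hypothesis must be applied to an arbitrary (possibly infinite or empty) family, and we must realize $B$ as $f[A]$ with $A\subseteq S$. Taking $A=S\cap f^{-1}[B]$ handles both. I expect no real obstacle.
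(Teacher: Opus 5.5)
Your proof is correct and is exactly the routine check the paper intends; the paper calls the lemma obvious and gives no proof. Preservation of arbitrary meets makes $f[S]$ and $f^{-1}[T]$ closed under meets, and the adjunction is the set-theoretic equivalence $f[S]\subseteq T \iff S\subseteq f^{-1}[T]$; your choice $A=S\cap f^{-1}[B]$ avoids choice and also covers the empty family.
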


\begin{lemmsub}\label{lemmf0}
Let $f\colon L\to M$ be a meet-preserving map between locales. Then
$f^{-1}[\OS]=\OS$ if and only if  $f^{-1}[\cf (b)]\subseteq(\intr (f^{-1}[\of(b)]))\co$ for every $b\in M$.
\end{lemmsub}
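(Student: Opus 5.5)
The plan is to turn both sides of the equivalence into conditions on elements of $L$, using Corollary~\ref{coradj1} and the formula for $\intr$ on meet-subsets. Since $f$ is meet-preserving, Corollary~\ref{coradj1} gives $f^{-1}[\cf(b)]=\cf(f^*(b))$ for every $b\in M$. Since $\of(b)$ is a sublocale, it is closed under meets. Hence $f^{-1}[\of(b)]\in\MC(L)$ by Lemma~\ref{meetsets}.

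For each $b$, put $u_b=\tbigvee\{a\in L\mid \of(a)\subseteq f^{-1}[\of(b)]\}$. Then $\intr(f^{-1}[\of(b)])=\of(u_b)$, and its complement is $\cf(u_b)$. Because $f^{-1}[\of(b)]$ is a meet-subset, the remark after the proposition on $S^\wedge$ gives $\of(u_b)\subseteq f^{-1}[\of(b)]$. So the right-hand condition of the lemma reads $\cf(f^*(b))\subseteq\cf(u_b)$, that is, $u_b\le f^*(b)$, for every $b\in M$.

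($\Rightarrow$) Assume $f^{-1}[\OS]=\OS$. Then
\[\of(u_b)\cap\cf(f^*(b))\subseteq f^{-1}[\of(b)]\cap f^{-1}[\cf(b)]=f^{-1}[\of(b)\cap\cf(b)]=f^{-1}[\OS]=\OS.\]
Since $\cf(f^*(b))$ is complemented with complement $\of(f^*(b))$, this yields $\of(u_b)\subseteq\of(f^*(b))$. Therefore $u_b\le f^*(b)$, which is the required inclusion $f^{-1}[\cf(b)]\subseteq(\intr(f^{-1}[\of(b)]))\co$.

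($\Leftarrow$) Specialize the hypothesis to $b=1$. We have $\of(1)=M$, so $f^{-1}[\of(1)]=L$ and $\intr L=L$, whose complement is $\OS$. The hypothesis therefore gives $f^{-1}[\OS]=f^{-1}[\cf(1)]\subseteq\OS$. The reverse inclusion holds because $f$ preserves the empty meet, so $f(1)=1$. Hence $f^{-1}[\OS]=\OS$.

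I do not expect a serious obstacle. The one point that needs care is justifying $\of(u_b)\subseteq f^{-1}[\of(b)]$, since in general $\intr S\not\subseteq S$. This is exactly why one must first note that $f^{-1}[\of(b)]$ is a meet-subset, which uses that $f$ preserves meets. The other ingredient is the standard fact that $\of(u)\cap\cf(v)=\OS$ iff $u\le v$, which follows from $\of(v)$ and $\cf(v)$ being complements in $\SL(L)$.
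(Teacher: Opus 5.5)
Your proof is correct and follows the paper's argument. For ($\Rightarrow$) you use $\intr S\subseteq S$ for the meet-subset $f^{-1}[\of(b)]$ to get an intersection inside $f^{-1}[\of(b)\cap\cf(b)]=f^{-1}[\OS]=\OS$, and for ($\Leftarrow$) you specialize to $b=1$. Your translation via $u_b$ and $f^*(b)$ makes explicit a step the paper leaves implicit: $f^{-1}[\cf(b)]=\cf(f^*(b))$ is a genuine sublocale, and this is what justifies passing from an empty intersection with an open sublocale to containment in its closed complement.
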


\begin{proof}
``$\Rightarrow$'': Since $f^{-1}[\of(b)]$ is a meet-subset we have
\begin{align*}
    f^{-1}[\cf (b)]\cap \intr (f^{-1}[\of(b)]) & \subseteq f^{-1}[\cf (b)]\cap f^{-1}[\of(b)] \\
    &=f^{-1}[\cf (b) \cap \of(b)]=\OS.
\end{align*}
We may then conclude that $f^{-1}[\cf (b)]\subseteq(\intr (f^{-1}[\of(b)]))\co$.

\smallskip
\nid
``$\Leftarrow$'': Take just the case $b=1$:
$$f^{-1}[\OS]=f^{-1}[\cf(1)]\subseteq (\intr(f^{-1}[\of(1)]))\co=\OS.\qedhere$$
\end{proof}

Now, we can rephrase again the characterization of localic maps from \cite[Cor. 4.8]{EPP22} (cf. \cite[Thm. 6.5.2]{PP22}) that asserts that a map $f\colon L\to M$ is a localic map if and only if the following conditions hold:
\begin{enumerate}[(a)]
\item $f^{-1}[\OS]=\OS$.
\item $f^{-1}[\cf (b)]$ is closed for every $b\in M$.
\item $f^{-1}[\of(b)]\supseteq f^{-1}[\cf(b)]\co$ for every $b\in M$.
\end{enumerate}

Indeed, it follows from (c) (and (b)) that $\intr (f^{-1}[\of(b)])\supseteq f^{-1}[\cf(b)]\co$. Moreover, since
(b) implies that $f$ is a meet-preserving map, Lemma \ref{lemmf0} guarantees the other inclusion and we may conclude that $f$ is a localic map if and only if
\begin{enumerate}[(1)]
\item
$f^{-1}[\cf (b)]$ is closed for every $b\in M$, and
\item $f^{-1}[\cf (b)]=(\intr (f^{-1}[\of(b)]))\co$ for every $b\in M$.
\end{enumerate}
Finally, since $(\intr (f^{-1}[\of(b)]))\co$ is a closed sublocale, we get immediately the following characterization of localic maps:

\begin{propsub}\label{theofloccaract}
    A plain map $f\colon L\to M$ between locales  is a localic map if and only if
    $$(\intr(f^{-1}[\of(b)]))\co = f^{-1}[\cf(b)]\quad\mbox{for every }b\in M.$$
\end{propsub}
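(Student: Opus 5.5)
The plan is to reduce the statement to the characterization recalled just before it: $f$ is a localic map if and only if (1) $f^{-1}[\cf(b)]$ is closed for every $b\in M$, and (2) $f^{-1}[\cf(b)]=(\intr(f^{-1}[\of(b)]))\co$ for every $b\in M$. This equivalence was obtained from \cite[Cor. 4.8]{EPP22} together with Lemma \ref{lemmf0}.

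For the forward direction, assume $f$ is localic. Then (2) holds, and that is exactly the displayed identity.

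For the converse, assume $(\intr(f^{-1}[\of(b)]))\co = f^{-1}[\cf(b)]$ for every $b\in M$. The first step is to note that $\intr(f^{-1}[\of(b)])$ is an open sublocale $\of(c)$. Its complement is therefore $\cf(c)$, so $f^{-1}[\cf(b)]$ is closed for every $b$ and condition (1) holds. By Cor. \ref{coradj1} this already gives that $f$ preserves arbitrary meets and has a left adjoint $f^*$ with $f^{-1}[\cf(b)]=\cf(f^*(b))$. Condition (2) is the hypothesis itself, so the rephrased characterization yields that $f$ is localic.

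The delicate point is that the rephrased characterization relied on deriving (a) and (c) from (1) and (2), and this derivation should be checked directly rather than trusted. I would verify it as follows.

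\textbf{Condition (a).} Take $b=1$. Then $\of(1)=M$, so $f^{-1}[\of(1)]=L$ and $\intr L=L$. Hence
\[
f^{-1}[\OS]=f^{-1}[\cf(1)]=L\co=\OS .
\]

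\textbf{Condition (c).} From the hypothesis, $f^{-1}[\cf(b)]\co=\intr(f^{-1}[\of(b)])$. Since $f$ is meet-preserving, $f^{-1}[\of(b)]$ is a meet-subset (Lemma \ref{meetsets}). By the observation that $\intr S\subseteq S$ for $S\in\MC(L)$, we get $\intr(f^{-1}[\of(b)])\subseteq f^{-1}[\of(b)]$. Thus
\[
f^{-1}[\cf(b)]\co\subseteq f^{-1}[\of(b)],
\]
which is (c).

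\textbf{Condition (b)} was established above. With (a), (b) and (c) in hand, \cite[Cor. 4.8]{EPP22} gives that $f$ is localic. The main (mild) obstacle is exactly this inclusion $\intr S\subseteq S$: it holds only for meet-subsets, so meet-preservation must be secured (via closedness of the preimages and Cor. \ref{coradj1}) before it can be used.
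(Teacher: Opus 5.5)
Your proposal is correct and follows essentially the same route as the paper. Like the paper, you reduce to the characterization of \cite[Cor. 4.8]{EPP22} via conditions (1)\&(2), and you note that $(\intr(f^{-1}[\of(b)]))\co$ is closed, so (2) already forces (1); your direct checks of (a), taking $b=1$, and of (c), using $\intr S\subseteq S$ for meet-subsets once Cor.~\ref{coradj1} gives meet-preservation, simply spell out the step the paper leaves implicit via Lemma~\ref{lemmf0} and the remark on meet-subsets.
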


Moreover, we have:
\begin{propsub}
Let $f\colon L\to M$ be a meet-preserving map between locales. Then $\intr(f^{-1}[\OS])=\OS$ if and only if
\[\intr(f^{-1}[\cf(b)]) \subseteq (\intr(f^{-1}[\of(b)]))\co\quad\mbox{for every }b\in M.\]
\end{propsub}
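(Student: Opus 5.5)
The plan is to imitate the proof of Lemma~\ref{lemmf0}, replacing the set-theoretic preimages by their interiors. Throughout, $f$ is meet-preserving, so by Lemma~\ref{meetsets} every $f^{-1}[T]$ with $T\in\MC(M)$ is a meet-subset of $L$. In particular $f^{-1}[\cf(b)]$, $f^{-1}[\of(b)]$ and $f^{-1}[\OS]$ are meet-subsets. Hence, by the remark following the proposition $\intr S\subseteq S^\wedge$, their interiors are contained in them. Another consequence of that remark is that $\of(a)\subseteq S$ iff $\of(a)\subseteq\intr S$ for meet-subsets $S$.

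For ``$\Rightarrow$'', fix $b\in M$. Since both preimages are meet-subsets, I would first get
\[\intr(f^{-1}[\cf(b)])\cap\intr(f^{-1}[\of(b)])\subseteq f^{-1}[\cf(b)]\cap f^{-1}[\of(b)]=f^{-1}[\OS].\]
The left-hand side is an intersection of two open sublocales, hence open, say $\of(c)$. Since $\of(c)$ is contained in the meet-subset $f^{-1}[\OS]$, it is contained in $\intr(f^{-1}[\OS])=\OS$. So the two open sublocales meet in $\OS$. Because $\of(d)$ is complemented with complement $\cf(d)$, from $\of(c')\cap\of(d)=\OS$ we obtain $\of(c')\subseteq\of(d)\co$. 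This gives $\intr(f^{-1}[\cf(b)])\subseteq(\intr(f^{-1}[\of(b)]))\co$.

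For ``$\Leftarrow$'', I take $b=1$, exactly as in Lemma~\ref{lemmf0}. Then $\cf(1)=\OS$ and $\of(1)=M$, so $f^{-1}[\of(1)]=L$ and $\intr L=L$, whose complement is $\OS$. Hence $\intr(f^{-1}[\OS])\subseteq\OS$, which is equality.

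The only delicate point is the step in ``$\Rightarrow$'' where $\intr(f^{-1}[\OS])=\OS$ is used. We cannot conclude that $f^{-1}[\OS]$ itself is $\OS$. Instead we must observe that an open sublocale contained in the meet-subset $f^{-1}[\OS]$ lies in its interior. This is exactly the equivalence $\of(a)\subseteq S\iff\of(a)\subseteq\intr S$ for $S\in\MC(L)$ noted earlier, so the step goes through.
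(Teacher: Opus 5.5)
Your proof is correct and is essentially the paper's argument. In the forward direction you intersect the two interiors, use that they lie inside the meet-subsets $f^{-1}[\cf(b)]$ and $f^{-1}[\of(b)]$, and observe that the resulting open sublocale sits in $\intr(f^{-1}[\OS])=\OS$. For the converse you specialize to $b=1$.

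One small correction to your closing remark. The implication $\of(a)\subseteq S\Rightarrow\of(a)\subseteq\intr S$ holds for every subset $S$, directly from the definition of $\intr S$ as a join. So the meet-subset hypothesis is only needed to get $\intr S\subseteq S$ for the two preimages in your first display.
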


\begin{proof}
``$\Rightarrow$'': Since $f$ is a meet-preserving map, $f^{-1}[\of(b)]$ and $f^{-1}[\cf(b)]$ are meet-subsets, and therefore
   $$\intr(f^{-1}[\of(b)])\cap \intr (f^{-1}[\cf(b)]) \subseteq f^{-1}[\of(b)] \cap f^{-1}[\cf(b)].$$
   Then
   \begin{align*}
       \intr(f^{-1}[\of(b)])\cap \intr (f^{-1}[\cf(b)]) &\subseteq \intr (f^{-1}[\of(b)] \cap f^{-1}[\cf(b)]) \\
       & =\intr f^{-1}[\OS]=\OS
   \end{align*}
   hence $\intr(f^{-1}[\cf(b)]) \subseteq (\intr(f^{-1}[\of(b)]))\co$.

   \smallskip
   \nid
   $\Leftarrow$: It suffices to consider the case $b=1$:
   $$\intr(f^{-1}[\OS])=\intr(f^{-1}[\cf(1)]) \subseteq (\intr(f^{-1}[\of(1)]))\co=\OS.\qedhere$$
\end{proof}

\bigskip
We now consider, for each (plain) map $f\colon L\to M$  between locales, the mappings $f_\of^{\to}$ and $f_\of^{\leftarrow}$ given by

\bigskip
\begin{center}
$\xymatrix@C=45pt@R=-7pt{& & & \of(a) \ar@/^.6pc/@{|..>}[r]_{}  &  (\clr(f[\cf(a)]))\co  \\
\of L \ar@/^1.5pc/[rr]^{f_\of^{\to}}   & \top & \of M \ar@/^1.5pc/[ll]^{f_\of^{\leftarrow}} & & \\
& & & \intr(f^{-1}[\of(b)])  &    \of(b) \ar@/^.6pc/@{|..>}[l]_{}\    }
$
\\[4mm]
{\sf Adjoint pair II: Theorem \ref{t327}}
\end{center}

\bigskip
\begin{remasub}
 If $f$ is localic then the largest sublocale contained in $f^{-1}[\of(b)]$ is $f_{-1}[\of(b)]=\of(f^*(b))$ hence $f_\of^{\leftarrow}(\of(b))=\of(f^*(b))$.
\end{remasub}

\begin{theosub}\label{t327}
Let $f\colon L\to M$ be an order-preserving map between locales.
The pair $(f_\of^\leftarrow,f_\of^{\to})$ is an adjoint pair if and only if $f$ is a localic map.
\end{theosub}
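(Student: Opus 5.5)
The plan is to unwind both sides of the adjunction condition, using that $f$ is order-preserving, until it becomes exactly the criterion of Proposition \ref{theofloccaract}. The pair $(f_\of^\leftarrow,f_\of^{\to})$ being adjoint, with $f_\of^\leftarrow$ the left adjoint, means
\[
\intr(f^{-1}[\of(b)])\subseteq \of(a)\quad\Longleftrightarrow\quad \of(b)\subseteq (\clr(f[\cf(a)]))\co
\qquad\text{for all } a\in L,\ b\in M .
\]

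First I simplify the right-hand side. By the Remark in the Type I subsection, order-preservation of $f$ gives $\clr(f[\cf(a)])=\cf(f(a))$. Hence $(\clr(f[\cf(a)]))\co=\of(f(a))$. Since $\of(b)\subseteq\of(c)$ iff $b\le c$, the right-hand side becomes $b\le f(a)$, that is, $a\in f^{-1}[\cf(b)]$.

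Next I simplify the left-hand side. Put $g(b)=\tbigvee\{x\in L\mid \of(x)\subseteq f^{-1}[\of(b)]\}$, so that by definition $\intr(f^{-1}[\of(b)])=\of(g(b))$. Then the left-hand side reads $g(b)\le a$, i.e. $a\in\cf(g(b))$. Moreover $\cf(g(b))=(\of(g(b)))\co=(\intr(f^{-1}[\of(b)]))\co$.

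So the adjunction holds if and only if, for every $b\in M$ and every $a\in L$,
\[
a\in (\intr(f^{-1}[\of(b)]))\co \iff a\in f^{-1}[\cf(b)].
\]
This is precisely the set equality $(\intr(f^{-1}[\of(b)]))\co=f^{-1}[\cf(b)]$ for all $b\in M$. By Proposition \ref{theofloccaract} this is equivalent to $f$ being a localic map, which gives both directions at once. For the converse direction one may also note that a localic map is automatically order-preserving, so the standing hypothesis costs nothing.

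The only delicate point is the identification $\clr(f[\cf(a)])=\cf(f(a))$. Without order-preservation this would fail, and this is exactly where the hypothesis enters. Everything else is a direct translation between order relations in $L$, $M$ and inclusions of open and closed sublocales. It relies only on $\cf(c)$ and $\of(c)$ being complements in $\SL(L)$ and on $c\mapsto\of(c)$ being an order-embedding.
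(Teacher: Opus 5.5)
Your proposal is correct and follows essentially the same route as the paper. Both arguments use order-preservation to replace $\clr(f[\cf(a)])$ by $\cf(f(a))$; the paper does this after passing to complements, via (3.1.1), while you do it directly. Both thereby reduce the adjunction to the pointwise equivalence $a\in(\intr(f^{-1}[\of(b)]))\co \iff a\in f^{-1}[\cf(b)]$ for all $a,b$, and then conclude by Proposition \ref{theofloccaract}.
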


\begin{proof}
For each $a\in L$ and $b\in M$,
\begin{align*} \Bigl(\intr(f^{-1}[\of(b)]) \subseteq \of(a) &\ \Leftrightarrow \ \of(b)\subseteq (\clr(f[\cf(a)]))\co\Bigr)\\
\mbox{iff } \ \    \Bigl(\cf(a)\subseteq (\intr(f^{-1}[\of(b)]))\co &\ \Leftrightarrow \ \clr(f[\cf(a)]) \subseteq \cf(b)\Bigr)\\
\mbox{iff } \ \    \Bigl(\cf(a)\subseteq (\intr(f^{-1}[\of(b)]))\co &\ \Leftrightarrow \ f[\cf(a)] \subseteq \cf(b)\Bigr).
\end{align*}
Since $f$ is order-preserving this is further equivalent to
\begin{align*} \Bigl(\cf(a)\subseteq (\intr(f^{-1}[\of(b)]))\co &\ \Leftrightarrow \ f(a) \in \cf(b)\Bigr), \mbox{ that is},\\
\mbox{iff } \ \    \Bigl(a\in (\intr(f^{-1}[\of(b)]))\co &\ \Leftrightarrow \ a\in f^{-1}[\cf(b)]\Bigr).
\end{align*}
The conclusion follows then immediately from Prop. \ref{theofloccaract}.
\end{proof}

Compare Prop.~\ref{theofloccaract} with this theorem: in the former we have the equality $(\intr(f^{-1}[\of(b)]))\co= f^{-1}[\cf(b)]$
while in the latter $(\intr(f^{-1}[\of(b)]))\co$ is the greatest closed sublocale contained in $f^{-1}[\cf(b)]$.
Next result is an immediate consequence of the theorem.

\begin{corosub}
A plain map $f\colon L\to M$ between locales  is a localic map if and only if $f$ is order-preserving and $(f_\of^\leftarrow,f_\of^{\to})$ is an adjoint pair.
\end{corosub}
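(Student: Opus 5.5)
The corollary follows from Theorem \ref{t327} once we note that every localic map is order-preserving. That theorem already gives the equivalence ``$(f_\of^\leftarrow,f_\of^{\to})$ is an adjoint pair iff $f$ is localic'' under the standing hypothesis that $f$ is order-preserving. So the plan is to split the biconditional into its two directions and reduce each to that theorem.

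For the implication ``$\Leftarrow$'', assume $f$ is order-preserving and $(f_\of^\leftarrow,f_\of^{\to})$ is an adjoint pair. Then $f$ satisfies the hypothesis of Theorem \ref{t327}, and the ``only if'' part of that theorem yields directly that $f$ is a localic map.

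For ``$\Rightarrow$'', assume $f$ is a localic map. We first record that $f$ is order-preserving, which is the only point needing an argument. By the discussion in Subsection \ref{locmaps}, a localic map is the right adjoint of a frame homomorphism, hence meet-preserving. Any meet-preserving map is monotone: if $x\le y$ then $f(x)=f(x\wedge y)=f(x)\wedge f(y)\le f(y)$. Alternatively, one can appeal to the first property listed in the Introduction, or to Corollary \ref{coradj1}, where order-preservation of $f$ and $f^*$ is noted. With $f$ order-preserving, the ``if'' part of Theorem \ref{t327} gives that $(f_\of^\leftarrow,f_\of^{\to})$ is an adjoint pair.

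There is no real obstacle here. The only thing to watch is that the hypothesis ``order-preserving'' in Theorem \ref{t327} is automatically available in the forward direction, and the monotonicity of meet-preserving maps settles that.
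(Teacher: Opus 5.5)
Your proof is correct and is exactly the paper's argument: the paper simply records the corollary as an immediate consequence of Theorem \ref{t327}, and the only extra ingredient is that localic maps, being meet-preserving, are order-preserving. A minor point is that your alternative citations are slightly off: monotonicity is remarked just before Proposition \ref{galois}, not in Corollary \ref{coradj1} or in the first listed property of adjunctions, but your direct argument $f(x)=f(x\wedge y)=f(x)\wedge f(y)\le f(y)$ makes those citations unnecessary.
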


\subsection{Type III: characterizing openness}\label{subopen}

As another variant, replace $f_\of^{\rightarrow}$ by the following $f_\of^{\Rightarrow}$:

\bigskip
\begin{center}
$\xymatrix@C=45pt@R=-7pt{& & & \of(a) \ar@/^.6pc/@{|..>}[r]_{}  &  \intr(f[\of(a)]) \\
\of L \ar@/^1.5pc/[rr]^{f_\of^{\Rightarrow}}   & \bot & \of M \ar@/^1.5pc/[ll]^{f_\of^{\leftarrow}} & & \\
& & & \intr(f^{-1}[\of(b)]) &    \of(b) \ar@/^.6pc/@{|..>}[l]_{}\    }
$
\\[4mm]
{\sf Adjoint pair III: Theorem \ref{theoadj3}}
\end{center}

\bigskip
Open continuous maps are naturally modelled in pointfree topology as {\em open localic maps}, that is,
localic maps $f \colon L \to M$ such that the image $f[\of(a)]$ of every open sublocale is open.
They are characterized by the celebrated Joyal-Tierney Theorem (\cite{JT84}):

\begin{quote}
{\em
A localic map $f \colon L \to M$ is open iff the adjoint frame homomorphism
$f^*\colon M\to L$ is a  complete Heyting homomorphism \textup(i.e. if it
preserves also arbitrary meets and the Heyting operation\textup).}
\end{quote}

Here we intend to study the openness property for general maps, not necessarily localic, so we will speak about {\em open maps} in a broad sense as (plain) maps $f\colon L\to M$ between locales
such that the image $f[\of(a)]$ of every open sublocale is still open.

\begin{lemmsub}\label{lemadj3}
 The following are equivalent for a  meet-preserving map $f\colon L\to M$ between locales:
\begin{enumerate}[\em(i)]
\item
The pair $(f_\of^\Rightarrow,f_\of^{\leftarrow})$ is an adjoint pair.
\item $\intr(f[\of(a)]) \subseteq \of(b)$ iff $\of(a)\subseteq \intr(f^{-1}[\of(b)])$, for every $a\in L$ and $b\in M$.
\item $\intr(f[\of(a)]) \subseteq \of(b)$ iff $\of(a)\subseteq f^{-1}[\of(b)]$, for every $a\in L$ and $b\in M$.
\item $\intr(f[\of(a)]) \subseteq \of(b)$ iff $f[\of(a)]\subseteq \of(b)$, for every $a\in L$ and $b\in M$.
\end{enumerate}
\end{lemmsub}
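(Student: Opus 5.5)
The plan is to prove the chain (i)$\Leftrightarrow$(ii)$\Leftrightarrow$(iii)$\Leftrightarrow$(iv). Each step only rewrites one side of the biconditional, using two facts established above. The first is the observation after the proposition on $\intr S\subseteq S^\wedge$: for every meet-subset $S\in\MC(L)$ we have $\intr S\subseteq S$, and hence
\[\of(a)\subseteq S \iff \of(a)\subseteq \intr S.\]
The second is Lemma \ref{meetsets}: since $f$ is meet-preserving, $f[S]\in\MC(M)$ whenever $S\in\MC(L)$, and $f^{-1}[T]\in\MC(L)$ whenever $T\in\MC(M)$.

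For (i)$\Leftrightarrow$(ii), I will simply unfold the definition of an adjoint pair $f_\of^\Rightarrow\dashv f_\of^\leftarrow$ between the posets $\of L$ and $\of M$. Every element of $\of L$ is some $\of(a)$ and every element of $\of M$ is some $\of(b)$. Substituting $f_\of^\Rightarrow(\of(a))=\intr(f[\of(a)])$ and $f_\of^\leftarrow(\of(b))=\intr(f^{-1}[\of(b)])$ into $f_\of^\Rightarrow(\of(a))\subseteq\of(b)\iff \of(a)\subseteq f_\of^\leftarrow(\of(b))$ gives (ii) literally. The remark in the paper that order-preservation follows from the adjunction means monotonicity need not be checked separately. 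Strictly, both maps are monotone anyway, since $\intr$, $f[-]$ and $f^{-1}[-]$ are.

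For (ii)$\Leftrightarrow$(iii), I only need to rewrite the right-hand side. The sublocale $\of(b)$ is a meet-subset of $M$, so $f^{-1}[\of(b)]\in\MC(L)$ by Lemma \ref{meetsets}. The first fact then gives $\of(a)\subseteq\intr(f^{-1}[\of(b)])\iff\of(a)\subseteq f^{-1}[\of(b)]$, so (ii) and (iii) are the same statement.

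For (iii)$\Leftrightarrow$(iv), I only need to compare the right-hand sides. The condition $\of(a)\subseteq f^{-1}[\of(b)]$ is, set-theoretically, exactly $f[\of(a)]\subseteq\of(b)$, by the image/preimage Galois connection on subsets; meet-preservation is not even needed for this step.

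No step is a real obstacle; the only point needing care is that the interior is taken of a genuine meet-subset, since $\intr S\subseteq S$ can fail in general. I do not apply that fact to $f[\of(a)]$, which is also a meet-subset, because (iv) keeps $\intr(f[\of(a)])$ unchanged on its left-hand side.
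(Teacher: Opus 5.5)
Your proposal is correct and matches the paper's proof: (i)$\Leftrightarrow$(ii) by unfolding the adjunction, and (iii)$\Leftrightarrow$(iv) by the subset-level correspondence $\of(a)\subseteq f^{-1}[\of(b)]$ iff $f[\of(a)]\subseteq\of(b)$. The paper also gets (ii)$\Leftrightarrow$(iii) exactly as you do, from $f^{-1}[\of(b)]$ being a meet-subset, so that $\of(a)\subseteq\intr(f^{-1}[\of(b)])$ iff $\of(a)\subseteq f^{-1}[\of(b)]$.
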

\begin{proof}
    The equivalences (i) $\Leftrightarrow$ (ii) and (iii) $\Leftrightarrow$ (iv) are trivial.

\nid
    (ii) $\Leftrightarrow$ (iii): Since $f$ is a meet-preserving map, $f^{-1}[\of(b)]$ is a meet-subset (recall Lemma \ref{meetsets}). Therefore $\of(a)\subseteq \intr(f^{-1}[\of(b)])$ iff $\of(a)\subseteq f^{-1}[\of(b)]$.
\end{proof}

\begin{theosub}\label{theoadj3}
Let $f\colon L\to M$ be a meet-preserving map. The pair $(f_\of^\Rightarrow,f_\of^{\leftarrow})$ is an adjunction if and only if $f$ is open.
\end{theosub}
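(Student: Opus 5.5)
The plan is to work entirely through the reformulation in Lemma~\ref{lemadj3}. Since $f$ is meet-preserving, the pair $(f_\of^\Rightarrow,f_\of^{\leftarrow})$ is an adjunction if and only if condition (iv) holds:
\[
\intr(f[\of(a)]) \subseteq \of(b)\iff f[\of(a)]\subseteq \of(b)\qquad\text{for all } a\in L,\ b\in M.
\]
So the theorem reduces to showing that (iv) is equivalent to $f[\of(a)]$ being open for every $a\in L$.

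The key preliminary observation is that $f[\of(a)]$ is a meet-subset of $M$. Indeed, $\of(a)$ is a sublocale, hence lies in $\MC(L)$, and by Lemma~\ref{meetsets} a meet-preserving $f$ sends meet-subsets to meet-subsets. We then apply the fact established after the proposition $\intr S\subseteq S^\wedge$: for $S\in\MC(M)$ one has $\intr S\subseteq S$. Hence
\[
\intr(f[\of(a)])\subseteq f[\of(a)]\qquad\text{for every } a\in L.
\]
In particular, the implication ``$f[\of(a)]\subseteq\of(b)\Rightarrow \intr(f[\of(a)])\subseteq\of(b)$'' in (iv) holds for free. Only the converse implication carries content.

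($\Leftarrow$) Suppose $f$ is open, so $f[\of(a)]=\of(c)$ for some $c\in M$. Then $\intr(f[\of(a)])=\intr\of(c)=\of(c)=f[\of(a)]$. The equality $\intr\of(c)=\of(c)$ holds because $\of(c)$ is among the open sublocales in the defining join, and conversely $\intr\of(c)\subseteq\of(c)$ since $\of(c)$ is a meet-subset. With $\intr(f[\of(a)])=f[\of(a)]$, condition (iv) is trivially true, so the pair is an adjunction.

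($\Rightarrow$) Assume (iv) and fix $a\in L$. By the definition of interior there is $b\in M$ with $\of(b)=\intr(f[\of(a)])$, namely $b=\tbigvee\{c\mid \of(c)\subseteq f[\of(a)]\}$. For this $b$ the left-hand side of (iv) holds trivially, so (iv) gives $f[\of(a)]\subseteq\of(b)=\intr(f[\of(a)])$. Combined with the reverse inclusion from the preliminary step, this yields $f[\of(a)]=\of(b)$, which is open.

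The only step requiring care is the inclusion $\intr(f[\of(a)])\subseteq f[\of(a)]$. In general $\intr S\not\subseteq S$, because the interior is a join taken in $\SL(M)$. This is precisely where the meet-preservation of $f$ is needed, via Lemma~\ref{meetsets}. Everything else is a direct unwinding of Lemma~\ref{lemadj3}.
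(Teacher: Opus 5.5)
Your proposal is correct and follows essentially the same route as the paper's proof. Both reduce the adjunction to condition (iv) of Lemma~\ref{lemadj3} and use Lemma~\ref{meetsets} to obtain $\intr(f[\of(a)])\subseteq f[\of(a)]$. For the forward direction you both apply (iv) with $\of(b)=\intr(f[\of(a)])$. The converse holds because (iv) is trivial once $f[\of(a)]$ is open, and your justification that $\intr\of(c)=\of(c)$ supplies a step the paper leaves implicit.
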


\begin{proof}
``$\Rightarrow$'': Consider an $a\in L$ and let $\of(b)=\intr(f[\of(a)])$. Since $f$ is meet-preserving, $f[\of(a)]\in\MC(M)$ (by Lemma \ref{meetsets}) and therefore $\intr(f[\of(a)])\subseteq f[\of(a)]$. On the other hand, it follows from condition (iv) in the Lemma that $f[\of(a)]\subseteq\intr(f[\of(a)])$. Hence $f$ is open.

\smallskip
\nid
``$\Leftarrow$'': If $f[\of(a)]=\intr(f[\of(a)])$, then condition (iv) of the Lemma is trivially satisfied.
\end{proof}

Combining adjunctions II and III (in Theorems \ref{t327} and \ref{theoadj3}) we get:

\begin{corosub}
An order-preserving map $f\colon L\to M$ is an open localic map if and only if $$f_\of^\Rightarrow\dashv f_\of^\leftarrow \dashv f_\of^\to.$$
\end{corosub}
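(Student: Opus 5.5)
The corollary should follow by splicing together Theorem \ref{t327} and Theorem \ref{theoadj3}. The only care needed is with hypotheses: Theorem \ref{t327} assumes $f$ order-preserving, while Theorem \ref{theoadj3} assumes $f$ meet-preserving. The notation $f_\of^\Rightarrow\dashv f_\of^\leftarrow \dashv f_\of^\to$ means two statements: $(f_\of^\Rightarrow,f_\of^\leftarrow)$ is an adjoint pair, and $(f_\of^\leftarrow,f_\of^\to)$ is an adjoint pair.

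For ``$\Rightarrow$'', let $f$ be an open localic map. Being localic, $f$ is meet-preserving and in particular order-preserving. Theorem \ref{t327} then gives $f_\of^\leftarrow\dashv f_\of^\to$. Since $f$ is meet-preserving and open, Theorem \ref{theoadj3} gives $f_\of^\Rightarrow\dashv f_\of^\leftarrow$.

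For ``$\Leftarrow$'', assume $f$ is order-preserving and both adjunctions hold. First apply Theorem \ref{t327} to the order-preserving $f$ and the adjunction $f_\of^\leftarrow\dashv f_\of^\to$; this shows $f$ is a localic map. A localic map is a right adjoint, so it is meet-preserving. Now the hypothesis of Theorem \ref{theoadj3} is satisfied, and the adjunction $f_\of^\Rightarrow\dashv f_\of^\leftarrow$ yields that $f$ is open. Hence $f$ is an open localic map.

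The only step needing attention is this ordering in ``$\Leftarrow$''. We are given only order-preservation, so Theorem \ref{theoadj3} cannot be applied directly. We must first extract localicity, and hence meet-preservation, from the second adjunction via Theorem \ref{t327}, and only then invoke Theorem \ref{theoadj3}. Beyond that, no computation is required.
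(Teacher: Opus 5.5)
Your proof is correct and is the paper's own argument: the paper derives the corollary simply by combining Theorems \ref{t327} and \ref{theoadj3}. Your explicit ordering in the converse (first getting localicity, hence meet-preservation, from $f_\of^\leftarrow\dashv f_\of^\to$, and only then invoking Theorem \ref{theoadj3}) makes explicit the hypothesis check that the paper leaves implicit.
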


\medskip
\subsection{Type IV: characterizing openness combined with continuity}

Finally, consider the mappings

\bigskip
\begin{center}
$\xymatrix@C=45pt@R=-7pt{& & & \cf(a) \ar@/^.6pc/@{|..>}[r]_{}  &  (\intr(f[\of(a)]))\co \\
\cf L \ar@/^1.5pc/[rr]^{f_\cf^{\Rightarrow}}   & \top & \cf M \ar@/^1.5pc/[ll]^{f_\cf^{\leftarrow}} & & \\
& & & \clr(f^{-1}[\cf(b)]) &    \cf(b) \ar@/^.6pc/@{|..>}[l]_{}\    }
$
\\[4mm]
{\sf Adjoint pair IV: Theorem \ref{theoadj4}}
\end{center}

\bigskip
\begin{remasub}
If $f$ is meet-preserving then  $f_\cf^{\leftarrow}[\cf(b)]=f^{-1}[\cf(b)]=\cf(f^*(b))$, where $f^*$ stands for the left adjoint of $f$.
\end{remasub}

Let $f\colon L\to M$ be a meet-preserving map between locales. For each $a\in L$, there is a $b\in M$ such that $\intr(f[\of(a)])=\of(b)$, and the $b$ is obviously unique. This assignment defines a map
$\varphi\colon L\to M$ such that $\intr(f[\of(a)])\co=\cf(\varphi(a))$. The proof of the following result is straightforward.

\begin{lemmsub}\label{lemadj4}
 The following are equivalent for a meet-preserving $f\colon L\to M$:
\begin{enumerate}[\em (i)]
\item $(f_\cf^{\leftarrow},f_\cf^\Rightarrow)$ is a an adjunction.
\item $\clr(f^{-1}[\cf(b)])\subseteq \cf(a)$ iff $\cf(b)\subseteq  (\intr(f[\of(a)]))\co$, for all $a\in L$ and $b\in M$.
\item $\cf(f^*(b))\subseteq \cf(a)$ iff $\cf(b)\subseteq \cf(\varphi(a))$, for all $a\in L$ and $b\in M$.
\item $a\le f^*(b)$ iff $\varphi(a)\le b$, for all $a\in L$ and $b\in M$.
\end{enumerate}
\end{lemmsub}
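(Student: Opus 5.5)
The plan is to run the chain (i)$\Leftrightarrow$(ii)$\Leftrightarrow$(iii)$\Leftrightarrow$(iv) by unfolding definitions and rewriting each side of the biconditional. No non-trivial lattice argument should be needed. The standing facts are Corollary \ref{coradj1} and the Remark preceding the lemma: since $f$ is meet-preserving, $f$ has a left adjoint $f^*$ with $f^{-1}[\cf(b)]=\cf(f^*(b))$. We also use that $\cf(x)\subseteq\cf(y)$ iff $y\le x$, which holds because closed sublocales are principal upsets.

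For (i)$\Leftrightarrow$(ii), the adjunction $f_\cf^{\leftarrow}\dashv f_\cf^{\Rightarrow}$ between $\cf M$ and $\cf L$ means by definition that
$f_\cf^{\leftarrow}(\cf(b))\subseteq \cf(a)$ iff $\cf(b)\subseteq f_\cf^{\Rightarrow}(\cf(a))$ for all $a\in L$, $b\in M$. Substituting the defining formulas $f_\cf^{\leftarrow}(\cf(b))=\clr(f^{-1}[\cf(b)])$ and $f_\cf^{\Rightarrow}(\cf(a))=(\intr(f[\of(a)]))\co$ gives (ii) verbatim. The one point to check is that both maps are order-preserving, so that they are morphisms of posets. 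For $f_\cf^{\leftarrow}$ this follows from monotonicity of preimage and closure. For $f_\cf^{\Rightarrow}$, we note that $\cf(a)\subseteq\cf(a')$ gives $\of(a')\subseteq\of(a)$, hence $\intr f[\of(a')]\subseteq \intr f[\of(a)]$, and complementation of open sublocales reverses inclusion. In any case, the displayed biconditional for all $a,b$ already forces monotonicity, as usual for Galois connections.

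For (ii)$\Leftrightarrow$(iii), rewrite the left side using $\clr(f^{-1}[\cf(b)])=\clr(\cf(f^*(b)))=\cf(f^*(b))$, since closed sublocales are closure-fixed. Rewrite the right side using the definition of $\varphi$: $(\intr(f[\of(a)]))\co=\cf(\varphi(a))$. This definition is well founded because $\intr(f[\of(a)])$ is always an open sublocale $\of(\varphi(a))$, and open and closed sublocales are complements of each other in $\SL(M)$. Finally, (iii)$\Leftrightarrow$(iv) is immediate from $\cf(x)\subseteq\cf(y)\iff y\le x$: the condition $\cf(f^*(b))\subseteq\cf(a)$ becomes $a\le f^*(b)$, and $\cf(b)\subseteq\cf(\varphi(a))$ becomes $\varphi(a)\le b$.

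The only step needing any care is the identification $(\of(c))\co=\cf(c)$ used to introduce $\varphi$, together with the uniqueness of $c$. The paper has already noted this uniqueness, since $\of(c)=\of(c')$ forces $c=c'$ (for instance via the complements $\cf(c)=\cf(c')$). Beyond that, everything is bookkeeping, so I expect no real obstacle.
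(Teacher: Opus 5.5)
Your proof is correct and is exactly the routine verification the paper has in mind; the paper omits it as straightforward. You get (i)$\Leftrightarrow$(ii) by unfolding the definitions of $f_\cf^{\leftarrow}$ and $f_\cf^{\Rightarrow}$, (ii)$\Leftrightarrow$(iii) from $f^{-1}[\cf(b)]=\cf(f^*(b))$ and the definition of $\varphi$, and (iii)$\Leftrightarrow$(iv) from $\cf(x)\subseteq\cf(y)\iff y\le x$.
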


\begin{theosub}\label{theoadj4}
Let $f\colon L\to M$ be a meet-preserving map between locales. The pair $(f_\cf^{\leftarrow},f_\cf^\Rightarrow)$ is an adjunction if and only if $f$ is an open localic map.
\end{theosub}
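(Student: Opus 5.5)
By Lemma~\ref{lemadj4}(iv), the pair $(f_\cf^{\leftarrow},f_\cf^\Rightarrow)$ is an adjunction exactly when $\varphi\dashv f^*$, that is, when $\varphi(a)\le b \Leftrightarrow a\le f^*(b)$ for all $a\in L$, $b\in M$. Here $f^*$ is the left adjoint of $f$, which exists because $f$ preserves meets. So the plan is to show that \emph{$f$ is open localic} is equivalent to \emph{$\varphi$ is left adjoint to $f^*$}. I would treat the two directions separately.

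($\Leftarrow$) Assume $f$ is an open localic map. Openness gives $f[\of(a)]$ open, hence $f[\of(a)]=\intr(f[\of(a)])=\of(\varphi(a))$. The image/localic-preimage adjunction of Subsection~\ref{images} and formula (2.7.1) then give the chain
\[
\varphi(a)\le b \iff f[\of(a)]\sue\of(b)\iff \of(a)\sue f_{-1}[\of(b)]=\of(f^*(b))\iff a\le f^*(b).
\]
This is condition (iv) of Lemma~\ref{lemadj4}. As a by-product, $\varphi$ is the left adjoint of $f^*$, as in Joyal--Tierney.

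($\Rightarrow$) Assume $\varphi\dashv f^*$. I would aim to prove the inclusion $f[\of(a)]\sue\of(\varphi(a))$ for every $a$. This suffices for openness: $f[\of(a)]$ is a meet-subset (Lemma~\ref{meetsets}), so $\intr(f[\of(a)])\sue f[\of(a)]$, and the inclusion gives equality. For localicness I would use Prop.~\ref{theofloccaract}, which requires $\intr(f^{-1}[\of(b)])=\of(f^*(b))$. Since $f^{-1}[\of(b)]$ is a meet-subset, $\of(a)\sue\intr(f^{-1}[\of(b)])$ iff $f[\of(a)]\sue\of(b)$. Once the key inclusion holds, this is equivalent to $\of(\varphi(a))\sue\of(b)$, i.e.\ to $\varphi(a)\le b$. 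By the adjunction this is equivalent to $a\le f^*(b)$, which identifies $\intr(f^{-1}[\of(b)])$ as $\of(f^*(b))$. Prop.~\ref{theofloccaract} then yields that $f$ is localic, and openness was already obtained.

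The main obstacle is the key inclusion $f[\of(a)]\sue\of(\varphi(a))$, which has to be proved without knowing (L1), (L2) in advance. The unit of $\varphi\dashv f^*$ gives $a\le f^*(\varphi(a))$, so it suffices to show $f[\of(f^*(c))]\sue\of(c)$ for all $c\in M$. Concretely, this means $c\to f(f^*(c)\to x)=f(f^*(c)\to x)$, a weak form of (L2).
\begin{itemize}
\item My first attempt would be to combine meet preservation of $f$, the counit $\varphi f^*\le\mathrm{id}$, and the identity $f^{-1}[\cf(c)]=\cf(f^*(c))$, which is complementary to $\of(f^*(c))$ in $\SL(L)$. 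The aim would be to force $f[\of(f^*(c))]\cap\cf(c)=\OS$ and then argue that the meet-subset $f[\of(f^*(c))]$ therefore lies in $\of(c)$.
\item Failing that, I would first extract (L1) from the adjunction at $b=1$ and reduce via Lemma~\ref{lemmf0}. I expect this step to need the most care.
\end{itemize}
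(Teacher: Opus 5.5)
There is a genuine gap in the forward direction. Your ($\Leftarrow$) direction is correct, and somewhat more direct than the paper's, which goes through Theorem~\ref{theoadj3}, Lemma~\ref{lemadj3}(ii) and Prop.~\ref{theofloccaract}. Your reduction of ($\Rightarrow$) to the inclusion $f[\of(f^*(c))]\sue\of(c)$ is also sound, and so are the derivations of openness and of the identity in Prop.~\ref{theofloccaract} from it.

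The problem is that this inclusion carries the whole content of ($\Rightarrow$), and you never prove it. Neither fallback proves it either.
\begin{itemize}
\item In your first attempt the final step fails. $\of(c)$ is the complement of $\cf(c)$ only in $\SL(M)$, so a \emph{meet-subset} meeting $\cf(c)$ only in $1$ need not lie in $\of(c)$. For example, in the four-element Boolean algebra with atoms $a,b$, the meet-subset $\{0,1\}$ meets $\cf(a)=\{a,1\}$ only in $1$, yet it is not contained in $\of(a)=\{b,1\}$. For a merely meet-preserving $f$ you do not know that $f[\of(f^*(c))]$ is a sublocale; showing that would essentially need (L2) already.
\item Your second attempt gives (L1) and, via Lemma~\ref{lemmf0}, only the inclusion $\intr(f^{-1}[\of(b)])\sue\of(f^*(b))$. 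The reverse inclusion still needed for Prop.~\ref{theofloccaract} is $\of(f^*(b))\sue f^{-1}[\of(b)]$. That is your key inclusion again, so this route is circular.
\end{itemize}

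The missing observation is that the hypothesis $\varphi\dashv f^*$ makes $f^*$ a \emph{right} adjoint. Hence $f^*$ preserves all meets, in particular $f^*(1)=1$ and $f^*(y\wedge c)=f^*(y)\wedge f^*(c)$. Since $f^*$ also preserves all joins (as left adjoint of $f$), it is a frame homomorphism, and its right adjoint $f$ is localic. This yields (L1) and (L2) at once, and it is exactly how the paper argues.

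The same fact also proves your key inclusion directly. For $y\in M$,
\[ y\le c\to f(f^*(c)\to x)\iff f^*(y)\wedge f^*(c)\le x\iff y\le f(f^*(c)\to x), \]
so $f(f^*(c)\to x)\in\of(c)$. With this in place, the rest of your argument goes through as written.

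Once $f$ is localic, the paper finishes differently. It rewrites $\clr(f^{-1}[\cf(b)])=f_{-1}[\of(b)]\co$, which turns condition (ii) of Lemma~\ref{lemadj4} into the openness criterion of Lemma~\ref{lemadj3}(iv).
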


\begin{proof} Suppose that $(f_\cf^{\leftarrow},f_\cf^\Rightarrow)$ is an adjunction. By condition (ii) of the lemma, for $a=1$ and $b=1$,
$$f^{-1}[\OS]\subseteq \cf(1) \  \mbox{ iff } \ \cf(1) \subseteq (\intr f[\of(1)])\co.$$ The right-hand side is obviously true so $f^{-1}[\OS]\subseteq \cf(1)=\OS$, that is, $$f(a)=1\ \Rightarrow \ a=1$$ which is precisely
condition (L1) in the definition of a localic map (\ref{locmaps}).
Moreover, from condition (iv) of the lemma we know that $\varphi$ is left adjoint to $f^*$. In particular, $f^*$ preserves binary meets, that is, $f$ satisfies condition (L2) of \ref{locmaps}.
In conclusion, $f$ is a localic map.

Then $\clr(f^{-1}[\cf(b)])=f_{-1}[\cf(b)]=f_{-1}[\of(b)]\co$ and
\[
\clr(f^{-1}[\cf(b)])\subseteq \cf(a)  \Leftrightarrow  f_{-1}[\of(b)]\co\subseteq \cf(a)  \Leftrightarrow \of(a) \subseteq f_{-1}[\of(b)] \Leftrightarrow f[\of(a)] \subseteq \of(b).
\]
Hence
condition (ii) of Lemma \ref{lemadj4} can be rephrased as
\[f[\of(a)] \subseteq \of(b) \ \mbox{ iff } \ \intr(f[\of(a)]) \subseteq \of(b)\]
which is precisely the openness condition on $f$.

Conversely, let $f$ be an open localic map. By Thm. \ref{theoadj3} and Lemma \ref{lemadj3}(ii), we have, for each $a\in L$ and $b\in M$,
\[\intr(f[\of(a)]\subseteq \of(b)\mbox{ iff }\of(a)\subseteq \intr(f^{-1}[\of(b)]),\]
that is,
\begin{equation}\tag{$*$}
\cf(b)\subseteq (\intr(f[\of(a)])\co\mbox{ iff }\intr(f^{-1}[\of(b)])\co\subseteq \cf(a).
\end{equation}
But $f$ is localic hence $\intr(f^{-1}[\of(b)])\co=f^{-1}[\cf(b)]$ (recall Prop. \ref{theofloccaract}) and ($*$) transforms to
\[\cf(b)\subseteq (\intr(f[\of(a)])\co\mbox{ iff }f^{-1}[\cf(b)]\subseteq \cf(a),\]
which asserts that $(f_\cf^{\leftarrow},f_\cf^\Rightarrow)$ is an adjunction by Lemma \ref{lemadj4}.
\end{proof}

Finally, combining adjunctions I and IV (Proposition \ref{p313} and Thm. \ref{theoadj4}) we obtain:

\begin{corosub}
A plain map $f\colon L\to M$ is an open localic map if and only if $$f_\cf^\to \dashv f_\cf^\leftarrow \dashv f_\cf^\Rightarrow.$$
\end{corosub}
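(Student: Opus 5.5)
The plan is to obtain the corollary directly by combining Proposition \ref{p313} and Theorem \ref{theoadj4}. Before doing so we have to reconcile their hypotheses. Proposition \ref{p313} applies to arbitrary plain maps, whereas Theorem \ref{theoadj4} is stated only for meet-preserving maps. The first adjunction will therefore be used to upgrade the plain map $f$ to a meet-preserving one, after which the second result can be invoked.

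For the forward direction, assume $f_\cf^\to \dashv f_\cf^\leftarrow \dashv f_\cf^\Rightarrow$.
\begin{enumerate}[(1)]
\item From $f_\cf^\to \dashv f_\cf^\leftarrow$, Proposition \ref{p313} gives that $f$ preserves arbitrary meets.
\item Hence $f$ is a meet-preserving map, so Theorem \ref{theoadj4} is applicable.
\item The adjunction $f_\cf^\leftarrow \dashv f_\cf^\Rightarrow$ says exactly that $(f_\cf^{\leftarrow},f_\cf^\Rightarrow)$ is an adjunction. By Theorem \ref{theoadj4}, $f$ is an open localic map.
\end{enumerate}

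For the converse, assume $f$ is an open localic map.
\begin{enumerate}[(1)]
\item Being localic, $f$ is a right adjoint and so preserves arbitrary meets.
\item Proposition \ref{p313} then yields $f_\cf^\to \dashv f_\cf^\leftarrow$.
\item Theorem \ref{theoadj4}, applied to the meet-preserving open localic map $f$, yields $f_\cf^\leftarrow \dashv f_\cf^\Rightarrow$.
\end{enumerate}

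There is no genuine obstacle here. The only point needing care is the order of the steps in the forward direction: Theorem \ref{theoadj4} cannot be invoked until meet-preservation is known, and that is supplied by the first adjunction through Proposition \ref{p313}. We should also check that the map $f_\cf^\leftarrow$ occurring in both adjunctions is literally the same assignment, $\cf(b)\mapsto \clr(f^{-1}[\cf(b)])$. It is, by the definitions of adjoint pairs I and IV, so the chained notation is consistent.
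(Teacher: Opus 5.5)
Your proposal is correct and is the paper's own argument: the paper obtains the corollary by combining Proposition \ref{p313} and Theorem \ref{theoadj4}. Your explicit ordering is right: first get meet-preservation from $f_\cf^\to \dashv f_\cf^\leftarrow$, then invoke Theorem \ref{theoadj4}, and note that $f_\cf^\leftarrow$ is the same assignment in both adjoint pairs. The paper leaves both of these points implicit.
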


\medskip
\section{Open localic maps revisited}

Recall the Joyal-Tierney open mapping theorem from \ref{subopen}. In this section, we will present a most direct proof of it, using only basics of the language of sublocales.
Besides the basic properties about images and preimages of localic maps, we only need the following property of localic maps:

\begin{lemm}\label{lem1}
Let $f\colon L\to M$ be a localic map with left adjoint $f^*$. For any $a\in L$ and $b\in M$, $$f[\of(a)\cap\of(f^*(b))]=f[\of(a)]\cap \of(b).$$
\end{lemm}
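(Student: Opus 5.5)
We prove the two inclusions separately, working elementwise with the descriptions $\of(a)=\setof{a\to x}{x\in L}$ and $\of(b)=\setof{b\to y}{y\in M}$ together with condition (L2), $f(f^*(b)\to x)=b\to f(x)$.

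\emph{Inclusion $\subseteq$.} Since $\of(a)\cap\of(f^*(b))\subseteq\of(a)$, we get $f[\of(a)\cap\of(f^*(b))]\subseteq f[\of(a)]$ at once. For the second factor, recall that $\of(f^*(b))=f_{-1}[\of(b)]$ by (2.7.1). The adjunction $f[-]\dashv f_{-1}[-]$ then gives $f[\of(f^*(b))]\subseteq\of(b)$. Alternatively, one can argue directly: an element $f^*(b)\to x$ is sent by (L2) to $b\to f(x)\in\of(b)$.

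\emph{Inclusion $\supseteq$.} This is the only step requiring a small construction. Take $y\in f[\of(a)]\cap\of(b)$, so $y=f(x)$ with $x\in\of(a)$ and $y=b\to y$. The candidate preimage is
\[
x'=f^*(b)\to x.
\]
We check three things.
\begin{enumerate}[(1)]
\item $x'\in\of(f^*(b))$. This holds because $x'$ has the form $f^*(b)\to(-)$.
\item $x'\in\of(a)$. Since $a\to x=x$, rule (H3) gives $a\to x'=f^*(b)\to(a\to x)=f^*(b)\to x=x'$. (This also follows from (S2), as $\of(a)$ is a sublocale.)
\item $f(x')=y$. By (L2), $f(x')=b\to f(x)=b\to y=y$.
\end{enumerate}
Hence $y\in f[\of(a)\cap\of(f^*(b))]$.

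No real obstacle is expected. The only point to notice is choosing $f^*(b)\to x$ as the witness rather than $x$ itself, and then (L2) does all the work.
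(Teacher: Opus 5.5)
Your proof is correct and is essentially the paper's argument. For $\supseteq$, the paper writes the element of $\of(a)$ as $a\to x$ and takes the witness $f^*(b)\to(a\to x)=(a\wedge f^*(b))\to x$, which is exactly your $x'=f^*(b)\to x$, with (L2) and (H3) doing the work; the inclusion $\subseteq$ is handled the same way via $f[\of(f^*(b))]=ff_{-1}[\of(b)]\subseteq\of(b)$.
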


\begin{proof}
The inclusion `$\subseteq$' is clear since $f[\of(f^*(b))]=ff_{-1}[\of(b)]\subseteq \of(b)$.

Conversely, let $y\in f[\of(a)]\cap \of(b)$. Then $y=b\to y$ and $y=f(a\to x)$ for some $x\in L$. Using (H3), we get
$$y=b\to y=b\to f(a\to x)=f(f^*(b)\to (a\to x))=f((a\wedge f^*(b))\to x)$$
where $(a\wedge f^*(b))\to x\in \of(a\wedge f^*(b))=\of(a)\cap \of(f^*(b))$.
\end{proof}

\begin{theo}\label{JTtheo}
The following are equivalent for a localic map $f\colon L\to M$:
\begin{enumerate}[\em(i)]
\item $f$ is open.
\item $f^*\colon M\to L$ is a complete Heyting homomorphism.
\item $f^*$ admits a left adjoint
$f_{!}$ that satisfies the identity
$$
f_!(a\wedge f^*(b))=f_!(a)\wedge b\mbox{ for all }a\in L\mbox{ and }b\in M.
$$
\item $f^*$ admits a left adjoint $f_{!}$  that satisfies the identity
$$
f(a\rightarrow f^*(b))=f_!(a)\rightarrow b\mbox{ for all }a\in L\mbox{ and }b\in M.
$$
\end{enumerate}
\end{theo}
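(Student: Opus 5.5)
The plan is to close the cycle (i)$\Rightarrow$(iv)$\Rightarrow$(iii)$\Rightarrow$(ii)$\Rightarrow$(i), working throughout with the sublocale language and Lemma \ref{lem1}.

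\emph{(i)$\Rightarrow$(iv).} Assume $f$ is open. For $a\in L$ write $f[\of(a)]=\of(f_!(a))$; the element $f_!(a)$ is unique. First, $f_!\dashv f^*$: we have $f_!(a)\le b$ iff $\of(f_!(a))\subseteq\of(b)$ iff $f[\of(a)]\subseteq\of(b)$. By the image/preimage adjunction this holds iff $\of(a)\subseteq f_{-1}[\of(b)]=\of(f^*(b))$, by (2.7.1), i.e. iff $a\le f^*(b)$.

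For the identity, Lemma \ref{lem1} gives
\[
\of(f_!(a\wedge f^*(b)))=f[\of(a)\cap\of(f^*(b))]=\of(f_!(a))\cap\of(b)=\of(f_!(a)\wedge b),
\]
which is already (iii). To reach (iv), compare the closed complements, or argue elementwise. For every $c\in M$,
\[
c\le f(a\to f^*(b)) \iff f^*(c)\wedge a\le f^*(b) \iff f_!(a\wedge f^*(c))\le b \iff f_!(a)\wedge c\le b \iff c\le f_!(a)\to b .
\]
This uses $f^*\dashv f$, $f_!\dashv f^*$ and the identity in (iii) with $b,c$ swapped. So (i) yields (iii) and (iv) together.

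\emph{(iv)$\Rightarrow$(iii).} Run the same chain of equivalences backwards: test $f_!(a\wedge f^*(b))\le c$ against $f_!(a)\wedge b\le c$ for every $c$, using the adjunctions and (iv).

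\emph{(iii)$\Rightarrow$(ii).} Since $f^*$ has a left adjoint, it preserves arbitrary meets. For Heyting preservation, check $c\le f^*(b\to b')$ iff $c\le f^*(b)\to f^*(b')$ for every $c\in L$. The left side is equivalent to $f_!(c)\wedge b\le b'$. The right side is equivalent to $c\wedge f^*(b)\le f^*(b')$, i.e. $f_!(c\wedge f^*(b))\le b'$, which by (iii) is again $f_!(c)\wedge b\le b'$.

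\emph{(ii)$\Rightarrow$(i).} Here $f^*$ preserves all meets, so it has a left adjoint $f_!$. We must show $f[\of(a)]=\of(f_!(a))$.

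For $\subseteq$: we have $a\le f^*f_!(a)$, so $\of(a)\subseteq\of(f^*f_!(a))=f_{-1}[\of(f_!(a))]$, and hence $f[\of(a)]\subseteq\of(f_!(a))$.

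For $\supseteq$: take $y\in\of(f_!(a))$, so $y=f_!(a)\to y$. We want an element of $\of(a)$ mapping to $y$, and the natural candidate is $a\to f^*(y)$ itself. So we need $f(a\to f^*(y))=f_!(a)\to y$, which is precisely (iv) and can be derived from (ii). Indeed, for $c\in M$,
\[
c\le f(a\to f^*(y)) \iff a\wedge f^*(c)\le f^*(y) \iff a\le f^*(c)\to f^*(y)=f^*(c\to y) \iff f_!(a)\le c\to y,
\]
and the last condition is equivalent to $c\le f_!(a)\to y$. Moreover $y=f_!(a)\to y$, so $f(a\to f^*(y))=y$.

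The remaining subtlety, and the step I expect to be the main obstacle, is that $a\to f^*(y)\in\of(a)$ is immediate, but we have been using $f(f^*(y))$ only through adjunction reasoning. The chain above never needs $ff^*=\mathrm{id}$, so the inclusion $\supseteq$ follows cleanly, giving $f[\of(a)]=\of(f_!(a))$ and hence openness.
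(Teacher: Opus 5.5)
Your proposal is correct and follows the paper's proof. You get (i)$\Rightarrow$(iii) from Lemma \ref{lem1} and the image/preimage adjunction with $f_{-1}[\of(b)]=\of(f^*(b))$, and you recover openness by using $a\to f^*(y)$ as the preimage witness, exactly as the paper's (iv)$\Rightarrow$(i) does. The only differences are cosmetic: you unwind the paper's ``square of left adjoints commutes iff square of right adjoints commutes'' argument for (ii)$\Leftrightarrow$(iii)$\Leftrightarrow$(iv) into explicit elementwise chains tested against every $c$, and your closing worry about needing $ff^*=\mathrm{id}$ is a non-issue, as you yourself conclude.
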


\begin{proof}
(i)$\Rightarrow$(iii): Suppose that for any $a\in L$ there is some $b\in M$ such that $f[\of(a)]=\of(b)$. Of course, $b$ is unique and we have a map $f_!\colon L\to M$.
Since $f[\of(a)]\subseteq \of(c)$ iff $\of(a)\subseteq f_{-1}[\of(c)]$, and $f_{-1}[\of(c)]=\of(f^*(c))$, we conclude that $f_!(a)\le c$ iff $a\le f^*(c)$ and thus that $(f_!,f^*)$ is an adjoint pair.

Finally, the Frobenius identity $f_!(a\wedge f^*(b))=f_!(a)\wedge b$ follows from Lemma \ref{lem1}.
 Indeed,
\begin{align*}
\of(f_!(a\wedge f^*(b)))&=f[\of(a\wedge f^*(b))]=f[\of(a)\cap \of(f^*(b))]=\\&=f[\of(a)]\cap \of(b)=\of(f_!(a))\cap \of(b)=\of(f_!(a)\wedge b).
\end{align*}

\smallskip
\noindent
(iv)$\Rightarrow$(i): It suffices to check that $f[\of(a)]=\of(f_!(a))$ for every $a\in L$. The inclusion $\of(f_!(a))\subseteq f[\of(a)]$ holds by the hypothesis. On the other hand, from $f^*f_!(a)\ge a$ it follows that $\of(a)\subseteq \of(f^*(f_!(a)))=f_{-1}[\of(f_!(a))]$, that is, $f[\of(a)]\subseteq \of(f_!(a))$.

The equivalences (ii)$\Leftrightarrow$(iii)$\Leftrightarrow$(iv) are proved in \cite[Prop. 7.3]{PP08}, using a basic property on adjunctions. For the sake of completeness, we include the proof here:

\smallskip
\noindent
(ii)$\Leftrightarrow$(iii):
The condition in (iii) means that, for every $b\in M$, the diagram
\begin{equation*}
\xymatrix@+1pc{L\ar[r]^{f_{!}}\ar[d]_{f^*(b)\wedge(-)}&M\ar[d]^{b\wedge(-)}\\
L\ar[r]_{f_{!}}&M}
\end{equation*}
commutes. This is equivalent to saying that the corresponding square of right adjoints
\begin{equation*}
\xymatrix@+1pc{L&M\ar[l]_{f^*}\\
L\ar[u]^{f^*(b)\rightarrow(-)}&M\ar[l]^{f^*}\ar[u]_{b\rightarrow(-)}}
\end{equation*}
commutes, which is precisely the Heyting property for $f^*$.

\smallskip
\noindent
(iii)$\Leftrightarrow$(iv):
The condition in (iii) means also that for every $a\in L$ the square
\begin{equation*}
\xymatrix@+1pc{L\ar[d]_{a\wedge(-)}&M\ar[l]_{f^*}\ar[d]^{f_{!}(a)\wedge(-)}\\
L\ar[r]_{f_{!}}&M}
\end{equation*}
commutes. Again this  is so iff the corresponding square of right adjoints
\begin{equation*}
\xymatrix@+1pc{L\ar[r]^{f}&M\\
L\ar[u]^{a\rightarrow(-)}&M\ar[u]_{f_{!}(a)\rightarrow(-)}\ar[l]^{f^*}}
\end{equation*}
commutes, which is precisely the condition in (iv).
\end{proof}

\begin{remas}\label{remasopen}
(1)
Let $f\colon L\to M$ be a localic map. Then
\begin{align*}
\varphi \dashv f^* \ & \Leftrightarrow \
\bigl(\of(\varphi(a))\subseteq \of(b)\mbox{ iff }\of(a)\subseteq \of( f^*(b))\bigr) \\
& \Leftrightarrow \ \bigl(\of(\varphi(a))\subseteq \of(b)\mbox{ iff }\of(a)\subseteq f_{-1}[\of(b)]\bigr)\\
& \Leftrightarrow \ \bigl(\of(\varphi(a))\subseteq \of(b)\mbox{ iff }f[\of(a)]\subseteq \of(b)\bigr)
\end{align*}
and we conclude that
the existence of a left adjoint $\varphi$ of $f^*$ is equivalent to the condition
\begin{quote}
{\em For each $a\in L$ there is a unique smallest $b\in M$ such that $f[\of(a)]\subseteq \of(b)$,
}
\end{quote}
that is,
\begin{quote}
{\em For each $a\in L$ there is a unique smallest $b\in M$ such that $\of(a)\hookrightarrow L\stackrel{f}{\to} M$ factorizes through $\of(b)\hookrightarrow M$:

\begin{center}
$\xymatrix@C=30pt@R=25pt{L \ar[r]^{f} & M  \\
\of(a)\ar@{^{(}->}[u] \ar[r]^{f}   & \of(b)\ar@{^{(}->}[u]}
$
\end{center}
}
\end{quote}

\smallskip
\nid (2) It might be worth pointing out an important result from \cite{PP21} that shows that for a subfit locale $M$ (a very mild separation property on locales, weaker than $T_1$ in the setting of spaces), any complete frame
homomorphism $h=f^*\colon M\to L$ preserves automatically the Heyting operation (see \cite[II.3.5.1]{PP21} for a proof) and hence one has the following:

\medskip
\begin{quote}
{\bf Proposition.} {\em Let $M$ be subfit. Then a localic map $f \colon L\to M$ is open if and only if its adjoint
frame homomorphism $f^*\colon M\to L$ is a complete lattice homomorphism.
}
\end{quote}
\end{remas}

\bigskip
We conclude this section with a useful property of open localic maps.

\begin{prop}\label{hskeletal2}
Let $f\colon L\to M$ be an open localic map and $T\in\SL(M)$. The least sublocale of $L$ that contains $f^*[T]$ is contained in $f^{-1}[T]$ hence in $f_{-1}[T]$. In particular, $f^*[T]\subseteq
f_{-1}[T]$.
\end{prop}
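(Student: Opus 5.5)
The plan is to describe the least sublocale containing $f^*[T]$ explicitly and then check its generators one at a time, using condition (iv) of Theorem \ref{JTtheo}.

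First I will record the standard description of the sublocale generated by a subset $A\subseteq L$:
\[
\langle A\rangle=\setof{\tbigwedge B}{B\subseteq \setof{x\to a}{x\in L,\ a\in A}}.
\]
This set contains $A$ because $1\to a=a$ by (H1). It is closed under meets by construction. It satisfies (S2) because, by (H2) and (H3),
\[
y\to\tbigwedge_i(x_i\to a_i)=\tbigwedge_i\bigl((y\wedge x_i)\to a_i\bigr).
\]
Finally, any sublocale containing $A$ contains every $x\to a$ by (S2) and every meet of these by (S1). So $\langle A\rangle$ is the least sublocale containing $A$.

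Next, take $A=f^*[T]$. Since $f$ preserves meets and $T$ is closed under meets, $f^{-1}[T]$ is a meet-subset of $L$ (Lemma \ref{meetsets}). Hence it suffices to show that every generator $x\to f^*(t)$, with $x\in L$ and $t\in T$, lies in $f^{-1}[T]$; that is, that $f(x\to f^*(t))\in T$. This is the only real step. Since $f$ is open, Theorem \ref{JTtheo} gives a left adjoint $f_!$ of $f^*$ satisfying (iv), so
\[
f(x\to f^*(t))=f_!(x)\to t ,
\]
and the right-hand side belongs to $T$ by (S2) for the sublocale $T$. Therefore $\langle f^*[T]\rangle\subseteq f^{-1}[T]$.

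Since $\langle f^*[T]\rangle$ is a sublocale contained in $f^{-1}[T]$, and $f_{-1}[T]$ is the largest such sublocale, we get $\langle f^*[T]\rangle\subseteq f_{-1}[T]$. In particular $f^*[T]\subseteq\langle f^*[T]\rangle\subseteq f_{-1}[T]$.

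I expect no serious obstacle. The only points needing care are the explicit description of the generated sublocale and the observation that only the generators need checking, because $f^{-1}[T]$ is closed under meets; openness enters solely through the identity (iv) of Theorem \ref{JTtheo}.
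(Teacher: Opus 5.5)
Your proposal is correct and follows essentially the paper's own proof: take the meet-closure of $\setof{x\to f^*(t)}{x\in L,\ t\in T}$, check (S2) via (H2)/(H3), and use the identity $f(x\to f^*(t))=f_!(x)\to t$ from Theorem \ref{JTtheo}(iv) to land in $T$. The only cosmetic difference is that the paper pushes $f$ through the whole meet $\tbigwedge_i(a_i\to f^*(t_i))$ directly, whereas you check generators and then invoke that $f^{-1}[T]$ is a meet-subset, which amounts to the same thing.
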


\begin{proof} Let $S$ be the meet-closure of the set
$$\{a\to f^*(t))\mid a\in L, t\in T\}.$$
$S$ is a sublocale of $L$. Indeed, for each $b\in L$ and $\bim_{i\in I}(a_i\to f^*(t_i))\in S$, $$b\to (\bim_{i\in I}(a_i\to f^*(t_i)))=\bim_{i\in I}(b\to (a_i\to f^*(t_i)))=
\bim_{i\in I}((b\wedge a_i) \to f^*(t_i))\in S.$$
Hence it is clearly the least sublocale of $L$ that contains $f^*[T]$. Moreover, it is contained in $f^{-1}[T]$. In fact, by Thm. \ref{JTtheo}(iv),
$$f(\bim_{i\in I}(a_i\to f^*(t_i)))=\bim_{i\in I}f(a_i\to f^*(t_i))=\bim_{i\in I}(f_{!}(a_i)\to t_i)\in T.\qedhere$$
\end{proof}

\medskip
\section{Commutativity of preimages with closure and interior}

Trivially, localic maps  $f\colon L\to M$ satisfy the following containments for any sublocale (indeed, meet-subset) $T\subseteq M$:
\begin{enumerate}[(C1)]
\item $f_{-1}[\intr T]\subseteq \intr{f_{-1}[T]}$.
\item  $\clr(f_{-1}[T])\subseteq f_{-1}[\clr T]$.
\end{enumerate}
In this section, we investigate when the converse containments hold.

The converse to (C1) is precisely openness:

\begin{prop}
A meet-preserving $f\colon L\to M$ is open if and only if $$\intr{f_{-1}[T]}\subseteq f_{-1}[\intr T]\quad\mbox{ for every }T\in\MC(M).$$
\end{prop}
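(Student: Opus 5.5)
The plan is to prove both implications directly from the definitions. Three facts from Section 3 do the work. First, for a meet-subset $S$ one has $\intr S\subseteq S$, and hence $\of(a)\subseteq S$ iff $\of(a)\subseteq\intr S$. Second, Lemma \ref{meetsets} says that images and preimages of meet-subsets under a meet-preserving $f$ are again meet-subsets. Third, $f_{-1}[T]$ is the largest sublocale contained in $f^{-1}[T]$, so for a sublocale $S$ we have $S\subseteq f_{-1}[T]$ iff $f[S]\subseteq T$. Since open sublocales and $\intr T$ are sublocales, hence meet-subsets, all the expressions $f_{-1}[T]$ and $f_{-1}[\intr T]$ below make sense.

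($\Rightarrow$) Assume $f$ is open and let $T\in\MC(M)$. Since $\intr f_{-1}[T]$ is the join in $\SL(L)$ of the open sublocales $\of(a)\subseteq f_{-1}[T]$, and $f_{-1}[\intr T]$ is a sublocale, it suffices to show that each such $\of(a)$ lies in $f_{-1}[\intr T]$.
\begin{enumerate}
\item From $\of(a)\subseteq f_{-1}[T]\subseteq f^{-1}[T]$ we get $f[\of(a)]\subseteq T$.
\item By openness, $f[\of(a)]=\of(b)$ for some $b\in M$.
\item Since $T$ is a meet-subset, $\of(b)\subseteq T$ gives $\of(b)\subseteq\intr T$.
\item Hence $f[\of(a)]\subseteq\intr T$, so $\of(a)\subseteq f^{-1}[\intr T]$.
\item As $\of(a)$ is a sublocale, $\of(a)\subseteq f_{-1}[\intr T]$.
\end{enumerate}

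($\Leftarrow$) Fix $a\in L$ and put $T=f[\of(a)]$, which is a meet-subset by Lemma \ref{meetsets}.
\begin{enumerate}
\item Clearly $\of(a)\subseteq f^{-1}[T]$. Since $\of(a)$ is a sublocale, $\of(a)\subseteq f_{-1}[T]$.
\item Since $\intr S\supseteq\of(a)$ whenever $\of(a)\subseteq S$, we get $\of(a)\subseteq\intr f_{-1}[T]$.
\item By hypothesis, $\intr f_{-1}[T]\subseteq f_{-1}[\intr T]\subseteq f^{-1}[\intr T]$.
\item Hence $f[\of(a)]\subseteq\intr T$.
\item Because $T$ is a meet-subset, $\intr T\subseteq T=f[\of(a)]$.
\end{enumerate}
Therefore $f[\of(a)]=\intr T$, which is an open sublocale, so $f$ is open.

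The main point needing care is that $\intr$ on arbitrary subsets is not deflationary. Each step above must use it only on meet-subsets: $T$, $f[\of(a)]$ and $f_{-1}[T]$. That is why the hypothesis that $f$ is meet-preserving, through Lemma \ref{meetsets}, is essential.
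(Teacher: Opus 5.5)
Your proof is correct and follows the paper's argument essentially step for step. For ($\Rightarrow$) you reduce to an open $\of(a)\subseteq f_{-1}[T]$ and use that $f[\of(a)]$ is open; for ($\Leftarrow$) you take $T=f[\of(a)]$ and combine $\of(a)\subseteq\intr f_{-1}[T]$ with $\intr T\subseteq T$ for meet-subsets. One minor point: in step 3 of ($\Rightarrow$) you do not need $T$ to be a meet-subset, since $\of(b)\subseteq T$ implies $\of(b)\subseteq\intr T$ for any subset $T$.
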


\begin{proof}
\smallskip
``$\Rightarrow$'': Let $f$ be open. It suffices to show that
$$\of(a)\subseteq f_{-1}[T] \ \Rightarrow \ \of(a)\subseteq f_{-1}[\intr T].$$
So let $\of(a)\subseteq f_{-1}[T] $. Then $f[\of(a)]\subseteq T$ hence $f[\of(a)]\subseteq \intr T$ (since $f[\of(a)]$ is open). Finally,
$\of(a)\subseteq f_{-1}f[\of(a)]\subseteq f_{-1}[\intr T]$.

%

\smallskip
\nid
``$\Leftarrow$'': Let $T=f[\of(a)]$. By Lemma \ref{meetsets}, $T$ is a meet-subset hence we have always $\intr T\subseteq T$ and, moreover, $$f_{-1}[\intr T]\supseteq\intr(f_{-1}[T])=\intr(f_{-1}f[\of(a)])\supseteq \of(a)$$ and thus
$T=f[\of(a)]\subseteq \intr T$.
\end{proof}

For localic maps we have:

\begin{coro}\label{propfopen}
Consider the following conditions about a localic map  $f\colon L\to M$:
\begin{enumerate}[\em (a)]
\item $f$ is open.
\item $\intr(f_{-1}[T])=f_{-1}[\intr T]$ for every sublocale $T$ of $M$.
\item $\clr(f_{-1}[T])=f_{-1}[\clr T]$ for every sublocale $T$ of $M$.
\end{enumerate}
Then $(a)\ \Leftrightarrow \ (b) \Rightarrow \ (c).$
\end{coro}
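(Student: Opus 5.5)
(a)$\Leftrightarrow$(b) comes almost entirely from the preceding Proposition together with (C1). For (a)$\Rightarrow$(b): since $f$ is localic it is meet-preserving, and every sublocale $T$ of $M$ is a meet-subset. The Proposition then gives $\intr(f_{-1}[T])\subseteq f_{-1}[\intr T]$. The reverse containment is (C1), so equality holds.

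For (b)$\Rightarrow$(a) I would rerun the ``$\Leftarrow$'' argument of the Proposition with $T=f[\of(a)]$. This is legitimate because for a localic map the image $f[\of(a)]$ is a sublocale (\ref{images}), so (b) applies to it. We get
$$f_{-1}[\intr T]=\intr(f_{-1}f[\of(a)])\supseteq\intr\of(a)=\of(a).$$
By the adjunction $f[-]\dashv f_{-1}[-]$ this yields $f[\of(a)]\subseteq\intr T$. Since $\intr T\subseteq T$ for meet-subsets, $T=\intr T$ is open.

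For (a)$\Rightarrow$(c), by (C2) it suffices to show $f_{-1}[\clr T]\subseteq\clr(f_{-1}[T])$. Write $\clr(f_{-1}[T])=\cf(c)$ with $c=\bim f_{-1}[T]$, and $\clr T=\cf(\bim T)$, so that $f_{-1}[\clr T]=\cf(f^*(\bim T))$ by (2.7.1). The goal is therefore the inequality $c\le f^*(\bim T)$ in $L$. The natural move is to use Theorem \ref{JTtheo}(ii): $f^*$ preserves arbitrary meets, so $f^*(\bim T)=\bim f^*[T]$. By Proposition \ref{hskeletal2}, $f^*[T]\subseteq f_{-1}[T]$, hence $c=\bim f_{-1}[T]\le\bim f^*[T]=f^*(\bim T)$, as required.

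The main obstacle is exactly this step: bounding the meet of the localic preimage by $f^*$ of the meet of $T$. It is handled by combining the Joyal--Tierney characterization (complete meet preservation of $f^*$) with Proposition \ref{hskeletal2}. Everything else is formal bookkeeping with the adjunction and the formulas (2.5.1) and (2.7.1).
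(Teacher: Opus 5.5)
Your proof is correct and follows the paper's argument: (a)$\Leftrightarrow$(b) uses the preceding Proposition, (C1) and the test sublocale $T=f[\of(a)]$, and (a)$\Rightarrow$(c) uses Proposition~\ref{hskeletal2}, exactly as the paper does. The one small difference is that in (a)$\Rightarrow$(c) you do not need the complete meet preservation of $f^*$: since $T$ is a sublocale, $\bim T\in T$, so $f^*(\bim T)\in f^*[T]\subseteq f_{-1}[T]$ already gives $\bim f_{-1}[T]\le f^*(\bim T)$, which is how the paper concludes.
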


\begin{proof}
(a)$\Rightarrow$(b):  By the Proposition and (C1).

\smallskip
\nid
(b)$\Rightarrow$(a): $T=f[\of(a)]$ is now a sublocale and we have $$f_{-1}[\intr T]=\intr(f_{-1}[T])=\intr(f_{-1}f[\of(a)])\supseteq \of(a)$$ and thus
$T=f[\of(a)]\subseteq  \intr T$.

\smallskip
\nid
(a)$\Rightarrow$(c): By (C2) it suffices to check the inclusion ``$\supseteq$''.
By Prop. \ref{hskeletal2}, $f^*[T]\subseteq f_{-1}[T]$ and so, for $x=\bim T\in T$,
$$\clr(f_{-1}[T])\supseteq \cf (f^*(x))=f_{-1}[\cf(x)]=f_{-1}[\clr T].\qedhere$$
\end{proof}

\begin{remas} (1) Contrarily to what happens with  topological spaces, where  (a)$\Leftrightarrow$(c) holds (\cite[Exercise 1.4.C]{RE}), the implication (c)$\Rightarrow$(a) is not true in locales (Johnstone \cite{PTJ}, answering a question posed in \cite{CGT}). A  counterexample is  the embedding $\mathcal{B}(M)\hookrightarrow M$ of the Booleanization $\mathcal{B}(M)=\{b\in M\mid b^{**}=b\}$, which satisfies (c), since all sublocales of a Boolean algebra are closed, but is rarely open.

Localic maps with property (c) are the
{\em hereditary skeletal maps} (Johnstone \cite{PTJ}). Johnstone characterized them as the $f\colon L\to M$ such that
\[
f^*(b\to c)\to f^*(c)=(f^*(b)\to f^*(c))\to f^*(c)
\] for every $b,c\in M$ (\cite[Lemma 4.1]{PTJ}), and proved that an hereditarily skeletal map $f\colon L\to M$ is open whenever $f[L]$ is a complemented sublocale of $M$.

\smallskip
\nid
(2) For a general categorical treatment of this question see
\cite{CGT,GT}.

\smallskip
\nid
(3)
The case $T=\of(b)$ in (c) amounts to $\cf(f^*(b^*))=\cf(f^*(b)^*)$, that is, to $f^*(b^*)=f^*(b)^*$. Frame homomorphisms satisfying this condition are called {\em nearly open} in \cite{BP}  and studied in detail there (see also Johnstone \cite{PTJ2,PTJ}).

There are further important weak variants of open localic maps, namely, the {\em skeletal maps} \cite{PTJ} ({\em weakly open} in \cite{BP}), defined by the identity $f^*(b^*)^*=f^*(b)^{**}$, and the {\em
sub-open maps} \cite{PTJ3,PTJ} that preserve the Heyting implication:

\bigskip
$$\xymatrix@C=30pt@R=15pt{\fbox{skeletal = weakly open} \ar@<1ex>@{=>}[d]^{\mbox{\cite{BP,PTJ}}} |{\SelectTips{cm}{}\object@{/}}|{} & f^*(b^*)^*=f^*(b)^{**}\\
\fbox{nearly open} \ar@<1ex>@{=>}[u]^{}   \ar@<1ex>@{=>}[d]^{\mbox{\cite{PTJ}}} |{\SelectTips{cm}{}\object@{/}}|{} & f^*(b^*)=f^*(b)^*\\
\fbox{hereditarily skeletal} \ar@<1ex>@{=>}[u]^{}  & f^*(b\to c)\to f^*(c)=(f^*(b)\to f^*(c))\to f^*(c)\\
\fbox{sub-open} \ar@<1ex>@{=>}[u]^{} \ar@<1ex>@{=>}[d]^{\mbox{\cite{PTJ}}} |{\SelectTips{cm}{}\object@{/}}|{} & \mbox{Heyting: }f^*(b\to c)=f^*(b)\to f^*(c)\\
\fbox{open} \ar@<1ex>@{=>}[u]^{} &  \mbox{complete \& Heyting} } $$

\bigskip
An hereditarily skeletal map is precisely a localic map $f\colon L\to M$ such that its pullback along any sublocale embedding $T\hookrightarrow M$ is skeletal. It is still an open problem whether there exists a dividing example for the classes of hereditarily skeletal maps and sub-open maps.

The class of open maps coincides with the classes of respectively {\em stably sub-open maps} (the ones whose pullbacks along any localic map are still sub-open), {\em stably nearly open maps} (all pullbacks of $f$ are nearly open), and
{\em stably skeletal maps} (all pullbacks of $f$
are skeletal) \cite[Thm. 4.7]{PTJ}.

\smallskip
\nid
(4) Let $f\colon L\to M$ be a localic map.  For a sublocale $T$ of $M$,
\begin{align*}
f_{-1}[\intr T]&=f_{-1}[\of(\tbigvee \{b\mid \of(b)\subseteq T\})]\\&=\of(f^*(\tbigvee \{b\mid \of(b)\subseteq T\}))=\of(\tbigvee \{f^*(b)\mid \of(b)\subseteq T\}).
\end{align*}
On the other hand,
\begin{align*}
\intr(f_{-1}[T])&=\tbigvee \{\of(a)\mid \of(a)\subseteq f_{-1}[T]\}\\&=\tbigvee \{\of(a)\mid f[\of(a)]\subseteq T\}=\of(\tbigvee \{a\mid f[\of(a)]\subseteq T\}).
\end{align*}
Hence, $f$ is open if and only if $$\tbigvee \{f^*(b)\mid \of(b)\subseteq T\}=\tbigvee \{a\mid f[\of(a)]\subseteq T\}.$$
\end{remas}

\begin{lemm}
Let $f\colon L\to M$ be a meet-preserving map between locales. Then
 $$\intr(f^{-1}[T])=\intr(f_{-1}[T])\quad\mbox{for every }T\in{\MC}(M).$$
\end{lemm}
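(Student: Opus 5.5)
Since $f$ preserves meets and $T\in\MC(M)$, Lemma \ref{meetsets} tells us that $f^{-1}[T]\in\MC(L)$. Hence the localic preimage $f_{-1}[T]$ makes sense: it is the largest sublocale of $L$ contained in $f^{-1}[T]$. We will compare the two open sublocales $\intr(f^{-1}[T])$ and $\intr(f_{-1}[T])$ through the open sublocales $\of(a)$ that generate them. By definition,
\[
\intr S=\of\bigl(\tbigvee\{a\in L\mid \of(a)\subseteq S\}\bigr).
\]
So it suffices to show that, for every $a\in L$,
\[
\of(a)\subseteq f^{-1}[T]\iff \of(a)\subseteq f_{-1}[T].
\]
Then the two index sets of the joins coincide, and so do the resulting open sublocales.

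The implication from right to left is immediate, since $f_{-1}[T]\subseteq f^{-1}[T]$. For the implication from left to right, note that $\of(a)$ is itself a sublocale of $L$. If it is contained in $f^{-1}[T]$, then it lies below the largest sublocale contained in $f^{-1}[T]$, namely $f_{-1}[T]$. This uses only the description of $f_{-1}[T]$ as the join $\tbigvee\{S\in\SL(L)\mid S\subseteq f^{-1}[T]\}$ in $\SL(L)$, which is again contained in $f^{-1}[T]$ because $f^{-1}[T]$ is closed under meets (formula \eqref{joinsub}).

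The only point needing care is exactly this last remark. We must make sure that $f_{-1}[T]$ genuinely sits inside $f^{-1}[T]$, so that the right-to-left implication is valid. This is where the hypotheses that $f$ is meet-preserving and $T\in\MC(M)$ are used: together they make $f^{-1}[T]$ a meet-subset. The join of all sublocales contained in $f^{-1}[T]$ then consists of meets of elements of $f^{-1}[T]$, and so remains inside it.

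We also note, as an aside, that since $f_{-1}[T]$ is a sublocale, the earlier observation gives $\intr(f_{-1}[T])\subseteq f_{-1}[T]$. Combined with the lemma, this yields $\intr(f^{-1}[T])\subseteq f_{-1}[T]$.
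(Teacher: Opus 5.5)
Your proof is correct and follows essentially the same idea as the paper's: an open sublocale lying in the meet-subset $f^{-1}[T]$ is a sublocale, hence lies in $f_{-1}[T]$, and the reverse inclusion comes from $f_{-1}[T]\subseteq f^{-1}[T]$. The only cosmetic difference is that you apply this to each generator $\of(a)$, whereas the paper applies it once to the whole open sublocale $\intr(f^{-1}[T])$, after first noting that $\intr(f^{-1}[T])\subseteq f^{-1}[T]$.
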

\begin{proof}
     It suffices to check the inclusion ``$\subseteq$''.
    Since $f$ is a meet-preserving map, $f^{-1}[T]$ is a meet-subset and
    $\intr(f^{-1}[T])\subseteq f^{-1}[T].$
    On the other hand, since $\intr(f^{-1}[T])$ is a sublocale, the former condition implies that $\intr(f^{-1}[T])\subseteq f_{-1}[T]$ and we may conclude that
    $\intr(f^{-1}[T])\subseteq \intr( f_{-1}[T]).$
\end{proof}

\begin{prop}
A plain map $f\colon L\to M$ is an open localic map if and only if
$$(\intr(f^{-1}[T]))^\co=f^{-1}[(\intr T)^\co]\quad \mbox{ for all }T\in\SL(M).$$
\end{prop}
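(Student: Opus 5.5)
The plan is to prove the two implications separately. In both directions the key is that $\intr T$ is always of the form $\of(b)$, so $(\intr T)\co=\cf(b)$, and the right-hand side is then the set-theoretic preimage of a closed sublocale.

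\emph{Necessity.} Let $f$ be an open localic map and $T\in\SL(M)$, and write $\intr T=\of(b)$. By the preceding Lemma, $\intr(f^{-1}[T])=\intr(f_{-1}[T])$. By Corollary \ref{propfopen}, (a)$\Rightarrow$(b), this equals $f_{-1}[\intr T]=f_{-1}[\of(b)]$, and by (2.7.1) that is $\of(f^*(b))$. Taking complements in $\SL(L)$ gives
\[
(\intr(f^{-1}[T]))\co=\cf(f^*(b))=f^{-1}[\cf(b)]=f^{-1}[(\intr T)\co],
\]
where the middle equality is Corollary \ref{coradj1}(iv), which applies because $f$ is meet-preserving.

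\emph{Sufficiency, localic part.} First I would specialize the hypothesis to the open sublocales $T=\of(b)$. Since $\intr\of(b)=\of(b)$, the identity becomes $(\intr(f^{-1}[\of(b)]))\co=f^{-1}[\cf(b)]$ for every $b\in M$. This is exactly the criterion of Proposition \ref{theofloccaract}, so $f$ is a localic map. In particular $f$ has a left adjoint $f^*$, and $f[S]$ is a sublocale of $M$ for every sublocale $S$ of $L$.

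\emph{Sufficiency, openness.} This is the step needing some care. Fix $a\in L$ and put $T=f[\of(a)]\in\SL(M)$, with $\intr T=\of(b)$. Since $\of(a)\subseteq f^{-1}[T]$ and $\of(a)$ is open, we get $\of(a)\subseteq\intr(f^{-1}[T])$. Hence $(\intr(f^{-1}[T]))\co\subseteq\cf(a)$, and the hypothesis then yields $f^{-1}[\cf(b)]\subseteq\cf(a)$, i.e.\ $\cf(f^*(b))\subseteq\cf(a)$, so $a\le f^*(b)$. Consequently $\of(a)\subseteq\of(f^*(b))=f_{-1}[\of(b)]$, and by the image/preimage adjunction $f[\of(a)]\subseteq\of(b)=\intr T$. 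Since $\intr T\subseteq T$ for the sublocale $T$, we conclude $f[\of(a)]=\intr(f[\of(a)])$ is open. The only subtle point is to use the hypothesis in the right direction, extracting $a\le f^*(b)$ from the containment of complements; the rest is bookkeeping with (2.7.1).
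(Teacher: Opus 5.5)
Your proof is correct and follows the paper's route. Necessity goes through the preceding Lemma and Corollary \ref{propfopen}, and localicness comes from specializing to $T=\of(b)$ and invoking Proposition \ref{theofloccaract}. For openness, the paper first derives $\intr(f_{-1}[T])=f_{-1}[\intr T]$ for all $T$ and then applies Corollary \ref{propfopen}(b)$\Rightarrow$(a); you instead run that argument directly at $T=f[\of(a)]$, which needs only the trivial inclusion $\of(a)\subseteq\intr(f^{-1}[T])$ and not the preceding Lemma.
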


\begin{proof}
Let $f\colon L\to M$ be an open localic map.
By the preceding lemma and by Cor. \ref{propfopen}, we conclude that $f_{-1}[\intr T]=\intr f_{-1}[T]=\intr f^{-1}[T]$.
Since $f$ is localic and $(\intr T)\co$ is a closed sublocale it follows that
$$f^{-1} [(\intr T)\co]=f_{-1} [(\intr T)\co]=f_{-1} [\intr T]\co=(\intr f^{-1} [T])\co.$$

Conversely, applying the hypothesis to $T=\of (b)$ we get $$f^{-1}[\cf (b)]=(\intr (f^{-1}[\of (b)]))\co.$$
Then $f$ is a localic map, by Prop. \ref{theofloccaract}.

Moreover, for any sublocale $T$ of $M$, we have, by the previous lemma,
$$
\intr(f_{-1}[T])=(\intr(f^{-1}[T])\co)\co =f^{-1}[(\intr T)\co]\co
    =f_{-1}[(\intr T)\co]\co
    =f_{-1}[\intr T].
$$
Hence $f$ is also an open map (by Cor. \ref{propfopen}).
\end{proof}

Now we extend
Prop. \ref{hskeletal2} from open maps to hereditary skeletal maps:

\begin{prop}\label{hskeletal1}
The following are equivalent for a localic map $f\colon L\to M$:
\begin{enumerate}[\em (i)]
\item $f$ is hereditarily skeletal, that is, $\clr(f_{-1}[T])=f_{-1}[\clr T]$ for every $T\in\SL(M)$.
\item $\tbigwedge f_{-1}[T]\le f^*(\tbigwedge T)$  for every $T\in\SL(M)$.
\item $f^*(\tbigwedge T) \in f_{-1}[T]$  for every $T\in\SL(M)$.
\end{enumerate}
\end{prop}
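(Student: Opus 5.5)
The plan is to reduce everything to statements about bottom elements, using the closure formula (2.5.1) and the identity (2.7.1), $f_{-1}[\cf(b)]=\cf(f^*(b))$. For any sublocale $T$ of $M$ we have $\clr T=\cf(\bim T)$, hence
$$f_{-1}[\clr T]=f_{-1}[\cf(\bim T)]=\cf(f^*(\bim T)).$$
On the other side, $\clr(f_{-1}[T])=\cf(\bim f_{-1}[T])$. Closed sublocales are determined by their generators, with $\cf(x)=\cf(y)$ iff $x=y$. So condition (i) is equivalent to the element identity
$$\bim f_{-1}[T]=f^*(\bim T)\quad\mbox{for all }T\in\SL(M).$$
By (C2) the inclusion $\clr(f_{-1}[T])\subseteq f_{-1}[\clr T]$ always holds. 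In terms of generators this reads $f^*(\bim T)\le \bim f_{-1}[T]$, and it holds for every localic map.

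(i)$\Leftrightarrow$(ii). Given the always-valid inequality $f^*(\bim T)\le\bim f_{-1}[T]$, the identity above is equivalent to the reverse inequality, which is exactly (ii). So (i)$\Leftrightarrow$(ii) follows directly from the translation.

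(ii)$\Rightarrow$(iii). Since $f_{-1}[T]$ is a sublocale, it is closed under meets, so $\bim f_{-1}[T]\in f_{-1}[T]$. Under (ii) combined with (C2) we get $\bim f_{-1}[T]=f^*(\bim T)$, and hence $f^*(\bim T)\in f_{-1}[T]$.

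(iii)$\Rightarrow$(ii). If $f^*(\bim T)\in f_{-1}[T]$, then trivially $\bim f_{-1}[T]\le f^*(\bim T)$, which is (ii).

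The only step needing care is the always-valid inequality $f^*(\bim T)\le \bim f_{-1}[T]$. I will justify it directly: $T\subseteq\cf(\bim T)$, so by monotonicity of $f_{-1}$ we get $f_{-1}[T]\subseteq f_{-1}[\cf(\bim T)]=\cf(f^*(\bim T))$. This means every element of $f_{-1}[T]$ lies above $f^*(\bim T)$. Apart from this, the main point of attention is correctly invoking (2.7.1) for localic preimages of closed sublocales. I do not expect a genuine obstacle: the proposition is essentially a reformulation of hereditary skeletality through the formula $\clr S=\cf(\bim S)$.
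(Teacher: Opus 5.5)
Your proof is correct and follows essentially the same route as the paper: (i)$\Leftrightarrow$(ii) comes from rewriting both sides as the closed sublocales $\cf(\tbigwedge f_{-1}[T])$ and $\cf(f^*(\tbigwedge T))$ and using that the inclusion (C2) always holds, and (ii)$\Leftrightarrow$(iii) comes from $\tbigwedge f_{-1}[T]\in f_{-1}[T]$. The differences are only cosmetic: you obtain $f_{-1}[\clr T]=\cf(f^*(\tbigwedge T))$ in one step from (2.5.1) and (2.7.1) rather than through the intersection over all $\cf(b)\supseteq T$, and you derive $f^*(\tbigwedge T)\le\tbigwedge f_{-1}[T]$ from monotonicity of $f_{-1}$ rather than from $f^*f(x)\le x$.
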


\begin{proof} (i)$\Leftrightarrow$(ii):
Since \[f_{-1}[\clr T]=f_{-1}[\tbigcap\{\cf(b)\mid \cf(b)\supseteq T\}]=\tbigcap\{\cf(f^*(b))\mid \cf(b)\supseteq T\}\] and
$\clr(f_{-1}[T])=\tbigcap\{\cf(a)\mid \cf(a)\supseteq f_{-1}[T]\}$, the converse containment of (C2) amounts to
$$f_{-1}[T]\subseteq \cf(a)\ \Rightarrow \ \tbigcap\{\cf(f^*(b))\mid \cf(b)\supseteq T\}\subseteq \cf(a)$$
that is,
$$a\le \tbigwedge f_{-1}[T] \ \Rightarrow \ a\le \tbigvee\{f^*(b)\mid b\le \tbigwedge T\}$$
or, in other words,
\begin{equation*}
\tbigwedge f_{-1}[T]\le f^*(\tbigwedge T).
\end{equation*}

\smallskip
\nid
(ii)$\Rightarrow$(iii):
The inequality $f^*(\tbigwedge T)\le \tbigwedge f_{-1}[T]$ is always true (indeed, for
$x=\tbigwedge f_{-1}[T]\in f_{-1}[T]$, $f(x)\in T$ so $f^*(\tbigwedge T)\le f^*f(x)\le x$). Hence $f^*(\tbigwedge T)=x\in f_{-1}[T]$.

\smallskip
\nid
(iii)$\Rightarrow$(ii): Obvious.
\end{proof}

\begin{rema}Consider the functor $\TL\colon {\bf Loc}\to{\bf Loc}$ defined by \[\TL(L)=\SL(L)\op\quad\mbox{ and }\quad \TL(f)=f[-]\] and the {\em dissolving maps} \cite{JI} $\gamma_L\colon \TL(L)\to L$ defined by $\gamma_L(S)=\bim S$ (see \cite{PP23} for more information on the dissolution of a locale). The dissolving maps are localic maps (in fact, they are the right adjoints of the natural frame embeddings $\cf_L= (a\mapsto \cf(a))\colon L\to\TL(L)$) and establish a natural transformation $\gamma\colon \TL\stackrel{\cdot}{\to} \mathrm{Id}$ (\cite[1.4]{JI}). Therefore, for each $f\colon L\to M$ in {\bf Loc}, the following square commutes
\[
\xymatrix@C=45pt@R=45pt{
\TL(L) \ar[r]^{\gamma_L} \ar[d]_{f[-]} & L \ar[d]^{f} \\
\TL(M)   \ar[r]_{\gamma_M} &  M }
\]
as well as the square of their left adjoints (dotted arrows in the diagram below)
\[
\xymatrix@C=45pt@R=45pt{
\TL(L) \ar@/_/[r]_{\gamma_L}^{\bot} \ar@/^/[d]^{f[-]} & L \ar@{..>}@/_/[l]_{\cf_L} \ar@/^/[d]^{f} \\
\TL(M) \ar@{..>}@/^/[u]^{f_{-1}[-]}_{\dashv}   \ar@/_/[r]_{\gamma_M}^{\bot} &  M \ar@{..>}@/_/[l]_{\cf_M} \ar@{..>}@/^/[u]^{f^*}_{\dashv} }
\]

In the last diagram, one has always the inequality \[\gamma_L \circ f_{-1}[-] \ge f^*\circ \gamma_M\] (since $\gamma_L\circ \cf_L\ge 1$ and $\cf_M\circ \gamma_M\le 1$).
Prop. \ref{hskeletal1} shows that $f$ is hereditarily skeletal precisely when one has the other inequality, hence the equality
\begin{equation}\label{r571}\tag{$*$}
\gamma_L \circ f_{-1}[-] = f^*\circ \gamma_M.
\end{equation}

When $f$ is open, we have moreover the adjunction $f_{!}\dashv f^*$ and then \eqref{r571} implies
$$f_{!}\circ \gamma_L\circ f_{-1}[-]\le \gamma_M$$ (that is, $f_{!}(\bim f_{-1}[T])\le \bim T$ for every $T\in\SL(M)$).
\end{rema}

\medskip
\section{Commutativity of images with closure and interior}

In this final section, we collect some results about the the corresponding situation for images (instead of preimages).
Not surprisingly (as for spaces \cite{RE}), this case is not so rich.

Similarly as open maps, we define {\em closed maps} as (plain) maps $f\colon L\to M$ between locales
such that the image $f[\cf(a)]$ of every closed sublocale $\cf(a)$ is closed. Closed meet-preserving maps are easy to characterize (in fact, the proof in \cite[Prop. III.7.3]{PP12} for localic maps also holds for arbitrary meet-preserving maps); they are precisely the meet-preserving maps $f\colon L\to M$ that satisfy each one of the following equivalent conditions:

\begin{enumerate}[(i)]
\item For every $a \in L$, $f[\cf(a)] = \cf(f(a))$.
\item For every $a \in L$ and $b \in  M$, $f(a \vee f^*(b)) = f(a) \vee b$.
\item For every $a \in L$ and $b, c \in M$, $c \le f(a) \vee b$ iff $f^*(c) \le a \vee f^*(b)$.
\item For every $a \in L$ and $b, c \in M$, $f(a)\vee b = f(a) \vee c$ iff $a \vee f^*(b) = a \vee  f^*(c)$.
\end{enumerate}

\begin{lemm}
Let $f\colon L\to M$ be a plain map between locales. Then  $f[\clr X]\subseteq \clr f[X]$ for all $X\subseteq L$ iff $f$ is meet-preserving.
\end{lemm}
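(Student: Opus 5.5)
The plan is to rewrite both sides using the formula $\clr X=\cf(\bim X)$, valid for arbitrary subsets $X\subseteq L$. The condition $f[\clr X]\subseteq \clr f[X]$ then reads: for every $x\ge \bim X$ we have $f(x)\ge \bim f[X]$. So the inclusion holds for all $X$ iff
\[
x\ge \tbigwedge X\ \Rightarrow\ f(x)\ge \tbigwedge_{y\in X} f(y)\qquad\text{for all } X\subseteq L,\ x\in L. \tag{$\dagger$}
\]
The proof reduces to showing that ($\dagger$) is equivalent to $f$ preserving arbitrary meets.

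For ``$\Leftarrow$'', let $f$ preserve arbitrary meets. Then $f$ is order-preserving, since $a\le b$ gives $f(a)=f(a\wedge b)=f(a)\wedge f(b)\le f(b)$. Hence $x\ge\bim X$ implies $f(x)\ge f(\bim X)=\bim f[X]$, which is ($\dagger$).

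For ``$\Rightarrow$'', assume ($\dagger$).
\begin{enumerate}[(1)]
\item Taking $X=\{a\}$ shows that $f$ is order-preserving: $x\ge a$ implies $f(x)\ge f(a)$.
\item For arbitrary $X$, take $x=\bim X$. Then ($\dagger$) gives $f(\bim X)\ge \bim f[X]$.
\item Conversely, monotonicity gives $f(\bim X)\le f(y)$ for each $y\in X$, hence $f(\bim X)\le\bim f[X]$.
\end{enumerate}
So $f(\bim X)=\bim f[X]$ for every $X\subseteq L$, including $X=\emptyset$. In that case ($\dagger$) with $x=1$ gives $f(1)\ge 1$, so $f(1)=1$.

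Nothing here is a real obstacle. The only points needing care are the empty subset case and remembering that the closure formula $\clr X=\cf(\bim X)$ was established for arbitrary subsets, not only sublocales. That formula is what turns the containment into the elementwise statement ($\dagger$). Alternatively, one could route ``$\Leftarrow$'' through Cor.~\ref{corogalois}, but the direct computation above is shorter.
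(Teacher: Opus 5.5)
Your proof is correct. The ``$\Leftarrow$'' direction is essentially the paper's: the paper computes $\bim f[\clr X]=f(\bim \clr X)=f(\bim X)=\bim f[X]$ instead of appealing to monotonicity, but it is the same observation.

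For ``$\Rightarrow$'' the routes genuinely differ. The paper applies the hypothesis only to the sets $X=f^{-1}[\cf(b)]$. This gives
\[
f[\clr f^{-1}[\cf(b)]]\subseteq \clr f[f^{-1}[\cf(b)]]\subseteq \cf(b),
\]
so every $f^{-1}[\cf(b)]$ is closed. It then concludes via Cor.~\ref{corogalois}: condition (2.6.1) produces a left adjoint, and right adjoints preserve all meets. You instead use $\clr X=\cf(\bim X)$ to unpack the inclusion into the elementwise condition $(\dagger)$. From $(\dagger)$ you read off monotonicity using singletons, and $f(\bim X)\ge \bim f[X]$ using $x=\bim X$.

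Your argument is fully elementary and self-contained. It does not need the fact that a map with a left adjoint preserves all meets, and it makes the empty-meet case $f(1)=1$ explicit. The paper's version fits its overall theme of characterizing meet preservation by closedness of preimages of closed sublocales, and it treats Cor.~\ref{corogalois} as a black box.

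A small correction to your closing aside: the paper uses Cor.~\ref{corogalois} for ``$\Rightarrow$'', not for ``$\Leftarrow$''. Your version would also work, though. If $f$ preserves meets, then $X\subseteq f^{-1}[\clr f[X]]$ and the right-hand side is closed, so $\clr X\subseteq f^{-1}[\clr f[X]]$.
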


\begin{proof}   ``$\Rightarrow$'': By Cor. \ref{corogalois}, it suffices to check that $\clr(f^{-1}[\cf (b)]) \subseteq f^{-1}[\cf (b)]$ for every $b \in M$. This follows immediately from  \[f[\clr f^{-1}[\cf(b)]]\subseteq \clr f[ f^{-1}[\cf (b)]] \subseteq \cf (b).\]
``$\Leftarrow$'': Since $f$ preserves meets,
$\bim f[\clr X]=f(\bim \clr X)=f(\bim X)=\bim f[X]$.
\end{proof}

\begin{lemm}
Let $f\colon L\to M$ be a monotone map between locales. Then  $f[\clr S]\subseteq \clr f[S]$ for all $S\in \SL(L)$ iff $f$ preserves meets of sublocales.
\end{lemm}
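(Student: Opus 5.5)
The plan is to make both sides of the equivalence explicit as inequalities between elements of $M$, using the formula $\clr S=\cf(\bim S)$ from (2.5.1) and its extension to arbitrary subsets, $\clr X=\cf(\bim X)$, from Section 3.1. We read ``$f$ preserves meets of sublocales'' as $f(\bim S)=\bim f[S]$ for every $S\in\SL(L)$. Fix $S\in\SL(L)$ and write $s_0=\bim S$. Then $\clr S=\cf(s_0)=\,\ur s_0$, and $\clr f[S]=\cf(\bim f[S])$. So the inclusion $f[\clr S]\subseteq \clr f[S]$ says exactly
\[
x\ge s_0\ \Rightarrow\ f(x)\ge \tbigwedge f[S]\qquad\text{for all }x\in L .
\]

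First step. We show this implication is equivalent to the single inequality $f(s_0)\ge \bim f[S]$. One direction is the case $x=s_0$, since $s_0\in\clr S$. For the other, monotonicity of $f$ gives $f(x)\ge f(s_0)\ge\bim f[S]$ whenever $x\ge s_0$. Hence, for monotone $f$, the inclusion $f[\clr S]\subseteq\clr f[S]$ holds if and only if $\bim f[S]\le f(\bim S)$.

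Second step. The reverse inequality $f(\bim S)\le \bim f[S]$ always holds for monotone $f$, because $\bim S\le s$ for each $s\in S$ gives $f(\bim S)\le f(s)$. Therefore the inclusion for $S$ is equivalent to $f(\bim S)=\bim f[S]$. Quantifying over all $S\in\SL(L)$ then yields the stated equivalence.

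I do not expect any real obstacle; the only point needing care is to use the closure formula for the subset $f[S]$, which need not be a sublocale, via Section 3.1. It is also worth noting that, by (S1), $\bim S\in S$, so $f(\bim S)\in f[S]$ and hence $\bim f[S]\le f(\bim S)$ holds automatically. Thus for monotone $f$ both conditions are in fact always satisfied, and I would remark on this after the proof; the proof itself is just the computation above.
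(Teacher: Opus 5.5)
Your proof is correct and is essentially the paper's argument: monotonicity gives $f(\bim S)\le\bim f[S]$, the inclusion evaluated at the point $\bim S\in\clr S$ gives $f(\bim S)\ge\bim f[S]$, and for the converse monotonicity yields $f(x)\ge f(\bim S)=\bim f[S]$ for every $x\ge\bim S$. Your closing remark is also right and worth keeping: since $\bim S\in S$ by (S1), one has $\bim f[S]=f(\bim S)$ for every monotone $f$, so both sides of the equivalence hold automatically, which the paper does not point out.
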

\begin{proof}
    Let $S\in \SL(L)$. Since $f$ is monotone we have $f(\bim S)\leq \bim f[S]$. On the other hand,
    from $f[\cf(\bim S)]=f[\clr S]\subseteq \clr f[S]=\cf(\bim f[S])$ it follows that $f(\bim S)\ge \bim f[S]$. Conversely, for any $a\ge \bim S$, $f(a)\ge f(\bim S)=\bim f[S]$.
\end{proof}

\begin{prop}
The following are equivalent for a  plain map $f\colon L\to M$ between locales:
\begin{enumerate}[\em (i)]
\item $\clr f[S] \subseteq f[\clr S]$ for all $S\in\SL(L)$.
\item $\clr f[X]\subseteq f[\clr X]$ for all $X\subseteq L$.
\item $f$ is closed.
\end{enumerate}
\end{prop}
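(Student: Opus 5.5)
The plan is to prove the cycle (iii)$\Rightarrow$(ii)$\Rightarrow$(i)$\Rightarrow$(iii). Throughout I will use the extended closure operator on arbitrary subsets from the subsection on Type I adjunctions, namely $\clr X=\cf(\bim X)$. Two of its properties matter: it is inflationary, and it is the smallest closed sublocale containing $X$. Recall also that \emph{closed} for the plain map $f$ means that each set-theoretic image $f[\cf(a)]$ is equal to some $\cf(b)$. No meet-preservation or monotonicity of $f$ should be needed, and this is consistent with the statement being about plain maps.

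For (iii)$\Rightarrow$(ii), let $X\subseteq L$. Since $X\subseteq\clr X$, we get $f[X]\subseteq f[\clr X]$. Now $\clr X=\cf(\bim X)$ is a closed sublocale, so by (iii) its image $f[\clr X]$ is closed. It is therefore a closed sublocale containing $f[X]$. By minimality of the closure, $\clr f[X]\subseteq f[\clr X]$.

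The implication (ii)$\Rightarrow$(i) is immediate, since every sublocale is in particular a subset of $L$.

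For (i)$\Rightarrow$(iii), fix $a\in L$ and apply (i) to the sublocale $S=\cf(a)$. Since $\clr\cf(a)=\cf(a)$, condition (i) gives $\clr f[\cf(a)]\subseteq f[\cf(a)]$. The reverse inclusion $f[\cf(a)]\subseteq \clr f[\cf(a)]$ always holds because $\clr$ is inflationary. Hence $f[\cf(a)]=\clr f[\cf(a)]=\cf(\bim f[\cf(a)])$, which is closed.

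I do not expect a real obstacle here. The only point needing care is that $\clr$ is applied to the bare subset $f[\cf(a)]$, which need not be a sublocale when $f$ is arbitrary. This is harmless, because $\clr$ was defined on all of $\mathcal P(M)$ and is inflationary there.
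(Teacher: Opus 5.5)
Your proof is correct and uses the same ingredients as the paper: closedness of $f[\clr X]$ absorbs the closure of $f[X]$, and testing the hypothesis on $S=\cf(a)$, together with inflationarity, shows that $f$ is closed. The only difference is the order of the cycle. You go (iii)$\Rightarrow$(ii)$\Rightarrow$(i)$\Rightarrow$(iii), which makes the link between (i) and (ii) trivial, whereas the paper goes (i)$\Rightarrow$(ii)$\Rightarrow$(iii)$\Rightarrow$(i) and uses idempotence of $\clr$ to obtain (ii) from (i).
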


\begin{proof}
(i)$\Rightarrow$(ii): For every $X\subseteq L$, $\clr X \in\SL(L)$ hence $\clr f[X]\subseteq \clr f[\clr X]\subseteq f[\clr(\clr X)]=f[\clr X]$.

\smallskip
\nid
(ii)$\Rightarrow$(iii): For every $a\in L$,  $\clr f[\cf (a)] \subseteq f[\cf (a)]$ hence $f[\cf(a)]$ is closed.

\smallskip
\nid
(iii)$\Rightarrow$(i): If $f$ is a closed map then, for any $S\in\SL(L)$, $f[\clr S]$ is closed hence $\clr f[S]\subseteq \clr f[\clr S]=f[\clr S]$.
\end{proof}

Then, immediately, we have:

\begin{coro}
Let $f\colon L\to M$ be a plain map between locales. Then $f[\clr X]=\clr f[X]$ for all $X\subseteq L$ if and only if $f$ is closed and meet-preserving.
\end{coro}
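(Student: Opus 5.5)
The corollary combines the two preceding results: the first lemma of this section gives one inclusion, and the last proposition gives the other. The plan is to prove the two implications separately.

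($\Rightarrow$) Assume $f[\clr X]=\clr f[X]$ for all $X\subseteq L$.
\begin{enumerate}[(1)]
\item In particular $f[\clr X]\subseteq \clr f[X]$ for every subset $X$, so the first lemma of this section (the one characterizing meet-preservation by $f[\clr X]\subseteq\clr f[X]$) gives that $f$ is meet-preserving.
\item The reverse inclusion $\clr f[X]\subseteq f[\clr X]$ also holds for every $X\subseteq L$. This is condition (ii) of the preceding proposition, whose implication (ii)$\Rightarrow$(iii) yields that $f$ is closed.
\end{enumerate}
No direct computation is needed beyond quoting these two results.

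($\Leftarrow$) Assume $f$ is closed and meet-preserving.
\begin{enumerate}[(1)]
\item Meet-preservation gives $f[\clr X]\subseteq\clr f[X]$ for all $X\subseteq L$, by the ``$\Leftarrow$'' direction of the same lemma.
\item Closedness gives $\clr f[X]\subseteq f[\clr X]$ for all $X\subseteq L$, by the implication (iii)$\Rightarrow$(ii) of the proposition. That implication passes through (i), using that $\clr X$ is a sublocale and that $f[\clr X]$ is closed.
\end{enumerate}
Combining the two inclusions gives the equality.

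The only delicate point is that the statement quantifies over arbitrary subsets $X$, not just sublocales. I must therefore use the subset versions: the first lemma rather than the second (monotone, sublocale) one, and item (ii) of the proposition rather than item (i). With that choice, the remaining steps are bookkeeping, and I expect no real obstacle.
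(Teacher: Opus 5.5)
Your proposal is correct and follows the paper's own route: the paper derives the corollary immediately from the subset lemma ($f[\clr X]\subseteq\clr f[X]$ for all $X$ iff $f$ is meet-preserving) and the equivalence (ii)$\Leftrightarrow$(iii) of the preceding proposition. Your choice of the subset versions, rather than the sublocale ones, is exactly what lets these two results combine directly.
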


\begin{coro}
The following are equivalent for a  meet-preserving map $f\colon L\to M$ between locales:
\begin{enumerate}[\em (i)]
\item $\clr f[S] = f[\clr S]$ for all $S\in\SL(L)$.
\item $\clr f[X] = f[\clr X]$ for all $X\subseteq L$.
\item $f$ is closed.
\end{enumerate}
\end{coro}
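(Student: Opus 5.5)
The plan is to deduce the corollary from the preceding Proposition (the equivalence of $\clr f[S]\subseteq f[\clr S]$ for sublocales, the same inclusion for arbitrary subsets, and closedness) together with the two lemmas on the reverse inclusion $f[\clr X]\subseteq \clr f[X]$. Since $f$ is meet-preserving, the first of those two lemmas gives $f[\clr X]\subseteq \clr f[X]$ for every subset $X\subseteq L$, and in particular for every sublocale $S$. Therefore, under our standing hypothesis, each of the equalities in (i) and (ii) is equivalent to the corresponding single inclusion $\clr f[S]\subseteq f[\clr S]$ (respectively $\clr f[X]\subseteq f[\clr X]$).

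Once this reduction is made, the three conditions are exactly the three conditions of the preceding Proposition, which are equivalent for any plain map. Concretely, (ii)$\Rightarrow$(i) is immediate by restricting to subsets that are sublocales. For (i)$\Rightarrow$(iii), we take $S=\cf(a)$, which is its own closure, to obtain $\clr f[\cf(a)]\subseteq f[\cf(a)]$, so $f[\cf(a)]$ is closed. For (iii)$\Rightarrow$(ii), the Proposition yields the inclusion $\clr f[X]\subseteq f[\clr X]$, and the lemma supplies the other inclusion.

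No step seems to be a real obstacle. The only point requiring care is remembering to invoke the meet-preservation lemma, which gives the inclusion $f[\clr X]\subseteq\clr f[X]$ valid for all subsets, so that the equalities collapse to the one-sided inclusions handled by the Proposition. Alternatively, the monotone-map lemma on sublocales could be used for the inclusion in (i), since a meet-preserving map is monotone and preserves meets of sublocales; but the subset version already covers both (i) and (ii) at once.
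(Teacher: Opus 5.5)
Your proof is correct and takes the same approach as the paper. The paper presents this corollary as an immediate consequence of the preceding Proposition together with the lemma that a meet-preserving map always satisfies $f[\clr X]\subseteq \clr f[X]$, which reduces the equalities in (i) and (ii) to the one-sided inclusions handled by the Proposition.
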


Finally, for the interior operator we have:

\begin{prop}
    Let $f\colon L\to M$ be a meet-preserving map between locales. Then $f$ is open if and only if $f[\intr S] \subseteq \intr f[S]$ for every $S\in\SL(L)$.
\end{prop}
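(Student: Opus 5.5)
The plan is to prove both directions directly from the definition of open map (images of open sublocales are open). We use two earlier facts: $\intr$ is order-preserving and idempotent on subsets, and $\intr T\subseteq T$ whenever $T$ is a meet-subset. The latter applies to $f[S]$ because $f$ is meet-preserving (Lemma \ref{meetsets}).

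For ``$\Rightarrow$'', let $f$ be open and $S\in\SL(L)$. By (2.5.2), $\intr S=\tbigvee\{\of(a)\mid \of(a)\subseteq S\}=\of(c)$ with $c=\tbigvee\{a\mid\of(a)\subseteq S\}$. Since $\intr S$ is itself an open sublocale, openness of $f$ gives $f[\intr S]=f[\of(c)]=\of(d)$ for some $d\in M$. As $\intr S\subseteq S$ (true since $S$ is a sublocale, hence a meet-subset), we get $\of(d)=f[\intr S]\subseteq f[S]$. Monotonicity and idempotence of $\intr$ then give $\of(d)=\intr\of(d)\subseteq \intr f[S]$. This direction is immediate.

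For ``$\Leftarrow$'', fix $a\in L$ and apply the hypothesis to $S=\of(a)$, which is a sublocale. Since $\intr\of(a)=\of(a)$, we obtain $f[\of(a)]\subseteq \intr f[\of(a)]$. Since $f[\of(a)]\in\MC(M)$ by Lemma \ref{meetsets}, the earlier observation gives $\intr f[\of(a)]\subseteq f[\of(a)]$. Hence $f[\of(a)]=\intr f[\of(a)]$ is open, and $f$ is open.

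The only delicate point is remembering that $\intr$ of an arbitrary subset need not be contained in it. That inclusion is needed twice: for $S$ itself in the forward direction, and for $f[\of(a)]$ in the converse. In both places it is secured because the set involved is a meet-subset. So I expect no real obstacle beyond invoking Lemma \ref{meetsets} and the remark that $\intr T\subseteq T$ for $T\in\MC$.
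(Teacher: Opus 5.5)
Your proof is correct and is essentially the paper's argument. The forward direction is the chain $f[\intr S]=\intr f[\intr S]\subseteq \intr f[S]$ (which you expand in more detail), and the converse takes $S=\of(a)$ and uses that $f[\of(a)]$ is a meet-subset, so that $\intr f[\of(a)]\subseteq f[\of(a)]$.
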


\begin{proof}
    If $f$ is an open map then $f[\intr S]=\intr f[\intr S]\subseteq \intr f[S]$. Conversely, for any $S=\of (a)$, $f[\of (a)]\subseteq \intr f[\of(a)]$, and since $f[\of (a)]$ is a meet-subset it follows that $f[\of (a)]=\intr f[\of(a)]$.
\end{proof}

\section*{Acknowledgements} The work of the second named author was partially supported by the Centre for Mathematics of the University of Coimbra
(UIDB/00324/2020, funded by the Portuguese Government through FCT/MCTES).


\bigskip
\begin{thebibliography}{99}

\bibitem{BP} B. Banaschewski and A. Pultr, Variants of openness, {\em Appl. Categ. Structures} 2 (1994) 331--350.


\bibitem{CGT}
M.M. Clementino, E. Giuli and W. Tholen, A functional approach to general topology, in: {\em Categorical Foundations},
Encyclopedia Math. Appl., vol. 97, Cambridge Univ. Press, 2004, pp. 103--163.


\bibitem{RE}
R. Engelking, {\em General Topology}, revised and completed edition, Heldermann, Berlin,
1989.

\bibitem{ME} M. Erné, Adjunctions and Galois connections: origins, history and development, in: {\em Galois Connections and Applications}, Mathematics and its Applications, vol. 565, Springer, 2004, pp. 1--138.

\bibitem{EKMS} M. Erné, J. Koslowski, A. Melton and G.E. Strecker, A primer on Galois connections, in:
{\em Papers on general topology and applications} (Madison, WI, 1991), Ann. New York Acad. Sci.,
vol. 704, 1993, pp. 103--125.


\bibitem{EPP22} M. Erné, J. Picado and A. Pultr,
Adjoint maps between implicative semilattices and continuity of localic maps,
{\em Algebra Univers.} 83 (2022) article n. 13.

\bibitem{FPP}  M.J. Ferreira, J. Picado and S. Pinto,  Remainders in pointfree topology
     {\em Topology Appl.} 245 (2018) 21--45.

\bibitem{GT} E. Giuli and W. Tholen, Openness with respect to a closure operator, {\em Appl. Categ. Structures} 8 (2000) 487--502.


\bibitem{JI} J. Isbell, Atomless parts of spaces, {\em Math. Scand.} 31 (1972) 5--32.

\bibitem{PTJ3} P.T. Johnstone, Open maps of toposes, {\em Manuscripta Math.} 31 (1980) 217--247.

\bibitem{PTJ2} P.T. Johnstone, Factorization theorems for geometric morphisms, II, in: {\em Categorical Aspects of Topology
and Analysis}, Lecture Notes in Math., vol. 915, Springer, 1982, pp. 216--233.

\bibitem{PTJ} P.T. Johnstone, Complemented sublocales and open maps, {\em Ann. Pure Appl. Logic} 137 (2006) 240--255.

\bibitem{JT84} A. Joyal and M. Tierney,
{\em An Extension of the Galois Theory of Grothendieck},
Memoirs of the American Mathematical Society, vol. 309. AMS, Providence, 1984.


\bibitem{EL06} E.S. Letzer, On continuous and adjoint morphisms between non-commutative spectra, {\em Proc. Edinburgh Math. Soc.} 49 (2006) 367--381.

\bibitem{EL15} E.S. Letzter, Continuity is an adjoint functor,
{\em The Amer. Math. Monthly} 122 (2015) 70--74.

\bibitem{SM} Saunders MacLane, {\em Categories for the Working Mathematician}, Graduate Texts in Mathematics 5,
Springer, 1971 (2nd ed. 1997).

\bibitem{PP08} J. Picado and A. Pultr, {\em  Locales treated mostly in a covariant way},
     Textos de Matemática, DMUC, vol. 41, 2008.

\bibitem{PP12} J. Picado and A. Pultr, {\em Frames and locales. Topology without points}, Frontiers in Mathematics, Springer Basel, 2012.

\bibitem{PP21} J. Picado and A. Pultr, {\em Separation in point-free topology}, Birkhäuser/Springer Cham, 2021.


\bibitem{PP22} J. Picado and A. Pultr, Notes on point-free topology,
 in: {\em New Perspectives in Algebra, Topology and Categories} (ed. by M.M. Clementino, A. Facchini, M. Gran),
     Coimbra Mathematical Texts, vol. 1, Springer, 2022, pp. 173--223.

\bibitem{PP23} J. Picado and A. Pultr, {\em Notes on sublocales and dissolution}, DMUC Preprint 23-26, 2023 (submitted for publication).


\bibitem{PPT} J. Picado, A. Pultr and A. Tozzi,
Ideals in Heyting semilattices and open homomorphisms,
{\em Quaestiones Math.} 30 (2007) 391--405.

\end{thebibliography}
\end{document}